\newcommand{\dd}{\mathrm{d}}
\newcommand{\E}{\mathbf{E}}
\newcommand{\p}{\mathbf{P}}
\newcommand{\R}{\mathbb{R}}
\newcommand{\N}{\mathbb{N}}
\newcommand{\fl}{\overline{f}}
\newcommand{\ind}{\mathbb{I}}
\newcommand{\Ssec}[2]{\genfrac\{\}{0pt}{0}{#1}{#2}}
\newcommand{\Sfirst}[2]{\genfrac[]{0pt}{0}{#1}{#2}}
\theoremstyle{plain}
\newtheorem{lemma}{Lemma}
\newtheorem{theorem}{Theorem}
\theoremstyle{remark}
\newtheorem{example}{Example}
\newtheorem{remark}{Remark}
\title{Nearly critical Galton--Watson processes}
\author{P\'eter Kevei\footnote{Bolyai Institute, University of Szeged,
Hungary.
E-mail: \texttt{kevei@math.u-szeged.hu}} \and 
Kata Kubatovics\footnote{Bolyai Institute, University of Szeged, Hungary.
E-mail: \texttt{kubatovicskata@gmail.com}}}
\date{}
\begin{document}

\maketitle

\begin{abstract}
We investigate Galton--Watson processes in varying environment, for which
$\fl_n \uparrow 1$ and $\sum_{n=1}^\infty (1-\fl_n) = \infty$, where 
$\fl_n$ stands for the offspring mean in generation $n$. 
Since the process dies out almost surely, to obtain nontrivial limit 
we consider two scenarios: conditioning on non-extinction, or adding 
immigration. In both cases we show that the 
process converges in distribution without normalization to a nondegenerate 
compound-Poisson limit law. The proofs rely on the shape 
function technique, worked out by Kersting \cite{Kersting}.

\emph{Keywords:} branching process in varying environment, 
Yaglom-type limit theorem, immigration, shape function.

\emph{MSC2020:} 60J80, 60F05
\end{abstract}

\section{Introduction}

A Galton--Watson branching process in varying environment
(BPVE) is defined as
$X_0=1$, and 
\begin{equation} \label{eq:X}
X_n = \sum_{j=1}^{X_{n-1}} \xi_{n,j}, \quad n\in\N = \{ 1, 2, \ldots \},
\end{equation}
where $\{\xi_{n,j}\}_{n, j \in \N}$ are nonnegative independent random variables,
such that for each $n$,  
$\{\xi_n, \xi_{n,j}\}_{j \in \N}$ are identically distributed. 
We can interpret $X_n$ as 
the size of the $n$th generation of a population and $\xi_{n,j}$ represents 
the number of offsprings produced by the $j$th individual in generation $n-1$.
These processes are natural extensions of usual homogeneous Galton--Watson
processes, where the offspring distribution does not change with the 
generation. 

The investigation of BPVE started 
in the early 1970's with the work of Church \cite{Church} and 
Fearn \cite{Fearn}.
There is a recent increasing interest again in these processes,
which was triggered by Kersting \cite{Kersting}, who obtained
necessary and sufficient condition for almost sure extinction
for {regular processes}. In \cite{Kersting}
he introduced the {shape function} of a generating function (g.f.),
which turned out to be the appropriate tool for the analysis. 
Moreover, in \cite{Kersting} Yaglom-type limit theorems (that is 
conditioned on non-extinction) was obtained.
Under different regularity conditions Bhattacharya and Perlman 
\cite{Bhatt} also proved Yaglom-type limit theorems both in the 
discrete and in the continuous time setting, extending the results
of Jagers \cite{Jagers}. For multitype processes
these questions were investigated by Dolgopyat et al.~\cite{DHKP}.
Cardona-Tob\'on and Palau \cite{CardonaPalau} obtained probabilistic 
proofs of the results in \cite{Kersting} using spine decomposition.
On general theory of BPVE we refer to the 
recent monograph by Kersting and Vatutin \cite{KerVat}, and to the 
references therein.

Here we are interested in \emph{nearly critical Galton--Watson processes}.
Let $f_n(s) = \E  s^{\xi_n}$, $s \in [0,1]$, denote the g.f.~of the offspring 
distribution of the $n$th generation, and  put
$\fl_n \coloneqq f_n'(1) = \E \xi_n$ for the offspring mean. We assume the following
conditions:
\begin{itemize}
\item[(C1)] $\fl_n<1$, $\lim_{n\to\infty}\fl_n = 1$, 
$\sum_{n=1}^{\infty}(1-\fl_n) = \infty$,
\item[(C2)] $\lim_{n\to\infty} \frac{f_n''(1)}{1-\fl_n} = \nu \in [0,\infty)$,
\item[(C3)] $\lim_{n\to \infty}\frac{f_n'''(1)}{1- \fl_n} = 0$, if $\nu>0$.
\end{itemize}
Condition (C1) means that the process is nearly critical and the 
convergence $\fl_n \uparrow 1$ is not too fast.
Consequently, $\E X_n = \prod_{j=1}^n \fl_j = \fl_{0,n}
\to 0$, thus the process dies out almost surely. 
Condition (C2) implies that $f_n''(1) \to 0$, thus
$f_n(s) \to s$, so the branching mechanism converges to the 
degenerate branching. 
Therefore, this nearly critical model does not have 
a natural homogeneous counterpart.
To obtain nontrivial limit theorems we can condition on non-extinction,
or add immigration. 

Conditioned on $X_n > 0$, in our Theorem \ref{thm:yaglom-geom} 
we prove that $X_n$ converges 
in distribution without normalization to a geometric distribution.
Although, the process is nearly critical, 
in this sense its behavior is similar to a homogeneous subcritical
process, where no normalization is needed to obtain a limit, see 
Yaglom's theorem \cite{Yaglom} (or in 
\cite[Theorem I.8.1, Corollary I.8.1]{AthreyaNey}).
For a regular BPVE (in the sense of \cite{Kersting}),
necessary and sufficient conditions were 
obtained for the tightness of $X_n$ (without normalization) 
conditioned on $X_n > 0$ in \cite[Corollary 2]{Kersting}.
However, to the best of our knowledge, our result is the first 
proper limit theorem without normalization. 
Similarly to Yaglom's theorem in the homogeneous critical case
(see \cite{Yaglom} or \cite[Theorem I.9.2]{AthreyaNey}), exponential limit 
for the properly normalized process conditioned on 
non-extinction was obtained in several papers, see 
e.g.~\cite{Jagers}, \cite{Bhatt}, \cite{Kersting}.
\smallskip

Adding immigration usually leads to similar behavior as conditioning 
on non-extinction. Let $Y_0=0$ and 
\begin{equation} \label{eq:Y}
Y_n=\sum_{j=1}^{Y_{n-1}}\xi_{n,j}+\varepsilon_n, \quad n\in\N,
\end{equation}
where $\{\xi_{n,j}, \varepsilon_n\}_{n, j \in \N}$ are nonnegative, 
independent random variables such that $\{\xi_n, \xi_{n,j}\}_{j \in \N}$ are
identically distributed. As before $\xi_{n,j}$ 
is the number of offsprings of the $j$th individual 
in generation $n-1$ and $\varepsilon_n$ is the number of immigrants.

The study of nearly critical Galton--Watson processes with immigration 
was initiated by Gy\"orfi et al.~\cite{GyIPV}, where it was assumed that the 
offsprings have Bernoulli distribution. In this case the resulting process
is an inhomogeneous first-order integer-valued autoregressive process (INAR(1)).
INAR processes are important in various fields of applied probability, for 
theory and applications we refer to the survey of Wei\ss \cite{Weiss}.
The setup of \cite{GyIPV} was extended by Kevei \cite{Kevei}, allowing 
more general offspring distributions. The multitype case was studied 
by Gy\"orfi et al.~\cite{GyIKP}. 

In general, Galton--Watson processes with 
immigration in varying environment (that is, with time-dependent immigration)
are less studied. Gao and Zhang \cite{GaoZhang} proved central limit theorem
and law of iterated logarithm. Isp\'any \cite{Ispany} obtained diffusion
approximation in the strongly critical case, i.e.~when instead of (C1) condition
$\sum_{n=1}^\infty |1 - \fl_n| < \infty$ holds. Gonz\'alez et al.~\cite{GKMP}
investigated  a.s.~extinction, and obtained limit theorems for the properly
normalized process. Since we prove limit theorems for the process without 
normalization, both our results and assumptions are rather different 
from those in the mentioned papers.

Generalizing the results in \cite{Kevei},
in Theorems \ref{thm:bev1} and \ref{thm:bev2} we show that under
appropriate assumption on the immigration, the slow extinction trend 
and the slow immigration trend are balanced and we get a nontrivial 
compound-Poisson limit distribution without normalization. 

\smallskip

The rest of the paper is organized as follows. Section 
\ref{sec:results} contains the  main results.
All the proofs are gathered together in Section \ref{sec:proofs}.
The proofs are rather technical, and based on analysis of the 
composite g.f.
We rely on the shape function technique worked out in \cite{Kersting}.

\section{Main results} \label{sec:results}

\subsection{Yaglom-type limit theorems} \label{ssec:Yaglom}

Consider the BPVE $(X_n)$ in \eqref{eq:X}. Condition (C1) implies 
that the process dies out almost surely. 
In \cite{Kersting} a BPVE process  $(X_n)$ is called \emph{regular} if
\begin{equation} \label{eq:regular}
\E ( X_n^2 \ind(X_n \geq 2)) \leq c \E (X_n \ind(X_n \geq 2)) \, 
\E [ X_n | X_n \geq 1]
\end{equation}
holds for some $c > 0$ for all $n$, where $\ind$ stands for the indicator
function.  In our setup, under (C1)--(C3) if 
$\nu = 0$ one can construct an example for which \eqref{eq:regular} does not 
hold. However, if $\nu > 0$ then
(C1) and (C2) imply \eqref{eq:regular}, in
which case the results in \cite{Kersting} apply. In particular, 
\cite[Corollary 2]{Kersting} states that the sequence 
$\mathcal{L}(X_n | X_n > 0)$ is tight if and only if 
$\sup_{n \geq 0} \E [ X_n | X_n > 0] < \infty$. 
Here $\mathcal{L}(X_n|X_n>0)$ stands for the law of $X_n$ conditioned on
non-extinction. In Lemma \ref{lemma:f-conv} below
we show that the latter condition holds, in fact the limit exists. Therefore,
the sequence of conditional laws is tight. In the next result, we prove 
that the limit distribution also exists.

The random variable $V$ has geometric distribution with parameter $p \in (0,1]$,
$V \sim $ Geom$(p)$,
if $\p ( V = k) = (1-p)^{k-1} p$, $k=1,2,\ldots$. 

\begin{theorem}\label{thm:yaglom-geom}
Assume that (C1)--(C3) are satisfied. Then, for the BPVE $(X_n)$ in \eqref{eq:X}
\[
\mathcal{L}(X_n|X_n>0)\overset{\mathcal{D}}{\longrightarrow} 
\textup{Geom}\left(\frac{2}{2+\nu}\right) \quad \text{as $n\to\infty$},
\]
where 
$\overset{\mathcal{D}}{\longrightarrow}$ denotes convergence in 
distribution.
\end{theorem}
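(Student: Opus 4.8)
The plan is to work directly with the conditional probability generating function (p.g.f.)
$\E[s^{X_n}\mid X_n>0] = (F_n(s)-F_n(0))/(1-F_n(0))$, where $F_n = f_1\circ\cdots\circ f_n$ is the p.g.f.\ of $X_n$, and to show that for every $s\in[0,1)$ it converges to $2s/(2+\nu(1-s))$, which is the p.g.f.\ of $\textup{Geom}(2/(2+\nu))$. Since this limit equals $1$ as $s\to1^-$ it is the p.g.f.\ of a proper law, so by the continuity theorem for p.g.f.'s pointwise convergence on $[0,1)$ yields convergence in distribution (this is consistent with the tightness already furnished by Lemma \ref{lemma:f-conv}). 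Thus the whole task reduces to identifying the limit of the conditional p.g.f.

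The engine is Kersting's shape function $\varphi_k(s)=\frac{1}{1-f_k(s)}-\frac{1}{\fl_k(1-s)}$. Writing $s_k=f_{k+1}\circ\cdots\circ f_n(s)$ (so $s_n=s$, $s_0=F_n(s)$) and $a_k=1/(1-s_k)$, its definition gives the one-step recursion $a_{k-1}=a_k/\fl_k+\varphi_k(s_k)$, which telescopes to
\[
\frac{1}{1-F_n(s)}=\frac{1}{\fl_{0,n}(1-s)}+\sum_{k=1}^n\frac{\varphi_k(s_k)}{\fl_{0,k-1}},\qquad \fl_{0,k}=\prod_{j=1}^k\fl_j,\ \fl_{0,0}=1.
\]
Setting $C_n(s)=\fl_{0,n}\sum_{k=1}^n\varphi_k(s_k)/\fl_{0,k-1}$, this rearranges to $1-F_n(s)=\fl_{0,n}(1-s)/(1+(1-s)C_n(s))$, whence
\[
\frac{1-F_n(s)}{1-F_n(0)}=(1-s)\,\frac{1+C_n(0)}{1+(1-s)C_n(s)}.
\]
A short algebraic check shows that if $C_n(s)\to\nu/2$ for every $s$, then $1-(1-F_n(s))/(1-F_n(0))\to 2s/(2+\nu(1-s))$, as required. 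So everything reduces to proving $C_n(s)\to\nu/2$.

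To evaluate $C_n(s)$ I would prove the uniform estimate $\sup_{s\in[0,1)}\bigl|\varphi_k(s)/(1-\fl_k)-\nu/2\bigr|\to0$ as $k\to\infty$. The key is the exact representation, obtained from $f_k(s)-1+\fl_k(1-s)=\int_s^1(u-s)f_k''(u)\,\dd u$ (Taylor with integral remainder at $s=1$),
\[
\varphi_k(s)=\frac{\int_s^1(u-s)\,f_k''(u)\,\dd u}{\fl_k\,(1-s)\,(1-f_k(s))}.
\]
Because $f_k''(1)\to0$ and $\fl_k\to1$ force $f_k\to\mathrm{id}$ uniformly, one has $1-f_k(s)=(1-s)(1+o(1))$ uniformly in $s$, so $\varphi_k(s)$ equals, up to a uniform factor $1+o(1)$ and division by $2\fl_k$, the $(u-s)$-weighted average of $f_k''$ over $[s,1]$. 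Since $f_k''$ is nondecreasing (all derivatives of a p.g.f.\ are nonnegative), this average lies between $f_k''(s)$ and $f_k''(1)$, hence differs from $f_k''(1)$ by at most $f_k''(1)-f_k''(s)\le\int_s^1 f_k'''(u)\,\dd u\le f_k'''(1^-)$. Dividing by $1-\fl_k$ and invoking (C2)--(C3) gives the uniform limit $\nu/2$ when $\nu>0$; when $\nu=0$ the cruder bound $0\le\varphi_k(s)\le f_k''(1)/\bigl(2\fl_k(1-f_k(0))\bigr)$ already forces $\varphi_k(s)/(1-\fl_k)\to0$ uniformly, so (C3) is not needed there. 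Feeding this into $C_n(s)$, together with the elementary telescoping identity $\fl_{0,n}\sum_{k=1}^n(1-\fl_k)/\fl_{0,k-1}\to1$, a Silverman--Toeplitz averaging argument (the weights $\fl_{0,n}(1-\fl_k)/\fl_{0,k-1}$ are nonnegative, vanish for each fixed $k$, and sum to $1-\fl_{0,n}\to1$) yields $C_n(s)\to\nu/2$ uniformly in $s$.

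The main obstacle is exactly this uniform control of the shape function: the iterate $s_k$ is close to $1$ only for $k$ small relative to $n$, whereas the dominant weights $1/\fl_{0,k-1}$ sit at $k$ near $n$, where $s_k$ is close to $s$. A pointwise ``$s_k\to1$'' argument therefore fails, and one genuinely needs the estimate on $\varphi_k$ to hold uniformly over all of $[0,1)$. It is the weighted-average representation above, combined with the monotonicity of $f_k''$ and condition (C3), that delivers this uniformity.
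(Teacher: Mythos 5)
Your proposal is correct and follows essentially the same route as the paper: you reduce the conditional p.g.f.\ via the shape-function telescoping identity \eqref{eq:shape} to showing $\fl_{0,n}\varphi_{0,n}(s)\to\nu/2$, and you obtain this from a uniform-in-$s$ estimate on $\varphi_k(s)/(1-\fl_k)$ combined with a Toeplitz-type averaging argument, which is precisely the structure of the paper's Lemmas \ref{lemma:phi}, \ref{lemma:f-conv} and \ref{lemma:anjk}. The only cosmetic differences are that you prove $\sup_{s}|\varphi_k(s)/(1-\fl_k)-\nu/2|\to 0$ in a single step using the integral form of the Taylor remainder and the monotonicity of $f_k''$, whereas the paper splits this into $\varphi_k(1)/(1-\fl_k)\to\nu/2$ (immediate from (C2)) plus $\sup_{s}|\varphi_k(1)-\varphi_k(s)|/(1-\fl_k)\to 0$ proved via the Lagrange remainder with $f_k'''$.
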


The result holds with $\nu = 0$, in which case the limit is degenerate at 1.

\begin{example}
Let $f_n(s) = f_n[0] + f_n[1] s + f_n[2] s^2$, 
then
$\fl_n = f_n[1] + 2 f_n[2]$, 
$f_n''(1) = 2 f_n[2]$, and $f_n'''(1) = 0$.
Assuming that $f_n[0]+f_n[2] \to 0$, $f_n[0]> f_n[2]$,
$\sum_{n=1}^\infty (f_n[0] - f_n[2]) = \infty$
and $2 f_n[2] / ( f_n[0]- f_n[2])  \to \nu \in [0,\infty)$ as $n\to\infty$, the conditions of 
Theorem \ref{thm:yaglom-geom} are fulfilled.
\end{example}

\subsection{Nearly critical branching processes with immigration}
\label{ssec:bevandorlas}

Recall $(Y_n)$ the BPVE with immigration from \eqref{eq:Y}, and 
introduce the factorial moments of the immigration 
\begin{equation*} \label{eq:def-mnk}
m_{n,k} \coloneqq \E [\varepsilon_n(\varepsilon_n-1) \ldots (\varepsilon_n-k+1)],
\quad k\in \N.
\end{equation*}

\begin{theorem}\label{thm:bev1}
Assume that (C1)--(C3) are satisfied and
\begin{itemize}
\item[(C4)] $\lim_{n\to\infty}\frac{m_{n,k}}{k!(1-\fl_n)} = \lambda_k, \quad k = 
1,2,\ldots,K$ and $\lambda_K=0$.
\end{itemize}
Then, for the BPVE with immigration $(Y_n)$ in \eqref{eq:Y}
\[
Y_n\overset{\mathcal{D}}{\longrightarrow} Y \quad \text{as  $n\to\infty$},
\]
where the random variable $Y$ has compound-Poisson distribution, and its g.f.
\[
f_Y(s) 
= \exp
\left\{ -\sum_{k=1}^{K-1} 
\frac{2^k\lambda_k}{\nu^k} \left( \log\left( 
1+\frac{\nu}{2}(1-s)\right) +
\sum_{i=1}^{k-1}(-1)^{i}\frac{\nu^i}{i2^i}
(1-s)^i\right)\right\}.
\]
\end{theorem}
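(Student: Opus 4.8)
The plan is to work entirely with probability generating functions and to feed the shape-function asymptotics of the iterates $f_{k,n}$ into the product representation of the law of $Y_n$. Write $g_n(s)=\E s^{Y_n}$, $h_k(s)=\E s^{\varepsilon_k}$, and for $0\le k\le n$ let $f_{k,n}=f_{k+1}\circ\cdots\circ f_n$ (with $f_{n,n}=\mathrm{id}$), so that $f_{k,n}'(1)=\fl_{k,n}=\prod_{j=k+1}^{n}\fl_j$. Conditioning on $Y_{n-1}$ in \eqref{eq:Y} gives $g_n(s)=g_{n-1}(f_n(s))\,h_n(s)$ with $g_0\equiv1$, and iterating yields
\[
g_n(s)=\prod_{k=1}^{n}h_k\big(f_{k,n}(s)\big),\qquad s\in[0,1).
\]
The stated $f_Y$ satisfies $f_Y(1^-)=1$, hence is the g.f.\ of a proper (compound-Poisson) law; by the continuity theorem for generating functions it therefore suffices to prove $\log g_n(s)\to\log f_Y(s)$ for each fixed $s\in[0,1)$.

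First I would determine the asymptotics of the iterates. Writing the shape-function identity $\tfrac{1}{1-f_j(u)}=\tfrac{1}{\fl_j(1-u)}+\varphi_j(u)$ and applying it to $f_{k,n}=f_{k+1}\circ f_{k+1,n}$ produces the linear recursion $\tfrac{1}{1-f_{k,n}(s)}=\fl_{k+1}^{-1}\tfrac{1}{1-f_{k+1,n}(s)}+\varphi_{k+1}(f_{k+1,n}(s))$, which solves to
\[
\frac{1}{1-f_{k,n}(s)}=\frac{1}{\fl_{k,n}(1-s)}+\sum_{l=k+1}^{n}\frac{\varphi_l\big(f_{l,n}(s)\big)}{\fl_{k,l-1}} .
\]
By (C2)--(C3) one has $\varphi_l(u)=\tfrac{\nu}{2}(1-\fl_l)(1+o(1))$ uniformly for $u$ in the relevant range (this is the estimate underlying Theorem \ref{thm:yaglom-geom}), while $\fl_{k,l-1}^{-1}=\exp\{\sum_{j=k+1}^{l-1}(1-\fl_j)\}(1+o(1))$. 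Setting $\sigma_k:=\sum_{j=k+1}^{n}(1-\fl_j)$ and approximating the $l$-sum by $\int_0^{\sigma_k}e^{\sigma}\dd\sigma=e^{\sigma_k}-1$ (a Riemann sum, legitimate since $1-\fl_l\to0$) gives the key estimate
\[
1-f_{k,n}(s)=D(\sigma_k)\,(1+o(1)),\qquad
D(\sigma):=\frac{2(1-s)}{e^{\sigma}\big(2+\nu(1-s)\big)-\nu(1-s)} .
\]
Taking $k=0$, where $\sigma_0\to\infty$ by (C1), recovers the asymptotics behind Theorem \ref{thm:yaglom-geom} and Lemma \ref{lemma:f-conv}.

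Next I would expand each immigration factor. Taylor's theorem for the p.g.f.\ $h_k$ gives $1-h_k(u)=\sum_{i=1}^{K-1}\tfrac{(-1)^{i-1}m_{k,i}}{i!}(1-u)^i+O\big(m_{k,K}(1-u)^K\big)$; since $\log h_k(u)=\log(1-(1-h_k(u)))$ and $1-h_k(u)=O(1-\fl_k)$, condition (C4)---in particular $\lambda_K=0$, which renders the remainder $o(1-\fl_k)$---yields
\[
\log h_k\big(f_{k,n}(s)\big)=(1-\fl_k)\sum_{i=1}^{K-1}(-1)^i\lambda_i\big(1-f_{k,n}(s)\big)^i+o(1-\fl_k) .
\]
Summing over $k$, the increments $\sigma_{k-1}-\sigma_k=1-\fl_k$ turn $\sum_{k=1}^{n}$ into a Riemann sum of $D(\cdot)^i$ over $[0,\sigma_0]$, so that $\log g_n(s)\to\sum_{i=1}^{K-1}(-1)^i\lambda_i\int_0^\infty D(\sigma)^i\dd\sigma$. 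Each integral is evaluated by the substitution $u=e^{\sigma}$, which turns $\int_0^\infty D(\sigma)^i\dd\sigma$ into $\big(2(1-s)\big)^i\int_1^\infty u^{-1}(Au-B)^{-i}\dd u$ with $A=2+\nu(1-s)$, $B=\nu(1-s)$, $A-B=2$; a partial-fraction decomposition produces a single logarithm $\log\tfrac{A}{A-B}=\log(1+\tfrac{\nu}{2}(1-s))$ together with rational boundary terms that assemble into the polynomial correction, giving
\[
\int_0^\infty D(\sigma)^i\,\dd\sigma=(-1)^{i-1}\frac{2^i}{\nu^i}\left(\log\Big(1+\frac{\nu}{2}(1-s)\Big)+\sum_{j=1}^{i-1}(-1)^j\frac{\nu^j}{j2^j}(1-s)^j\right).
\]
Substituting this back into the sum over $i$ produces exactly $\log f_Y(s)$.

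The main obstacle is not any single computation but the \emph{uniformity} needed to justify the two Riemann-sum passages. As $k$ runs from $n$ down to $1$ the argument $f_{k,n}(s)$ sweeps the whole interval---bounded away from $1$ when $k$ is close to $n$, yet exponentially close to $1$ for small $k$---so the estimate $\varphi_l(u)\approx\tfrac{\nu}{2}(1-\fl_l)$ and the expansion of $\log h_k$ must be controlled uniformly across this range, and the accumulated error terms (the $o(1-\fl_k)$ above together with the discarded orders $i\ge K$) must be shown to total $o(1)$. I would handle this by dominating the error sums by $\sup_k(1-\fl_k)$ times the convergent integrals $\int_0^\infty D(\sigma)^i\,\dd\sigma$ on the range where $1-\fl_k$ is small, while the exponential decay of $D(\sigma_k)$ as $\sigma_k\to\infty$ absorbs the contribution of the small-$k$ terms.
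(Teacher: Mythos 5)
Your proof follows the same architecture as the paper's (product formula for the g.f., factorial-moment expansion of $h_k$ with the remainder killed by $\lambda_K=0$, shape-function asymptotics for $1-f_{k,n}(s)$, and a Riemann-sum passage to an explicit integral; in fact your $\int_0^\infty D(\sigma)^i\,\dd\sigma$ is exactly the paper's integral in \eqref{eq:int-conv} after the substitution $y=e^{-\sigma}$). However, there is a genuine flawed step. Your key estimate $1-f_{k,n}(s)=D(\sigma_k)(1+o(1))$ rests on the approximation $\fl_{k,l-1}^{-1}=\exp\{\sum_{j=k+1}^{l-1}(1-\fl_j)\}(1+o(1))$, and this is false under (C1)--(C3) alone: the multiplicative error is $\exp\{\sum_j[-\log\fl_j-(1-\fl_j)]\}=\exp\{\tfrac12\sum_j(1-\fl_j)^2(1+o(1))\}$, and (C1) does not force $\sum_n(1-\fl_n)^2<\infty$. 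For instance, with $1-\fl_n=n^{-1/2}$ one gets $e^{-\sigma_0}/\fl_{0,n}\asymp\sqrt{n}\to\infty$, so for small $k$ the quantities $1-f_{k,n}(s)$ and $D(\sigma_k)$ differ by unbounded factors. Note this is \emph{not} the obstacle you flag in your closing paragraph: the uniformity of $\varphi_l(u)\approx\tfrac{\nu}{2}(1-\fl_l)$ and of the expansion of $h_k$ is unproblematic (Lemma \ref{lemma:phi} is uniform in $u\in[0,1]$); what fails is the exponential approximation of the mean products, precisely in the regime $\sigma_k$ large that you propose to let "decay absorb." The gap is repairable: truncate at $\sigma_k\le\Sigma$, where $\sum_{j:\,\sigma_j\le\Sigma}(1-\fl_j)^2\le\Sigma\max_j(1-\fl_j)\to0$ legitimizes the approximation, and bound both tail sums by $C\int_\Sigma^\infty e^{-i\sigma}\dd\sigma$ using $1-f_{k,n}(s)\le\fl_{k,n}(1-s)\le e^{-\sigma_k}$; but this argument is not in your proposal. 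A related point: your error terms $o(1-\fl_k)$ summed over $k$ give only $o(\sigma_0)$, which is useless since $\sigma_0\to\infty$ by (C1); the errors must carry a decaying factor such as $\fl_{k,n}$. The paper sidesteps both issues by never approximating $\fl_{j,n}$ at all: it uses $\{\fl_{j,n}\}$ itself as the partition, with weights $a_{n,j}^{(k)}=(1-\fl_j)\fl_{j,n}^k$, so that Lemma \ref{lemma:anjk} and the convexity bound \eqref{eq:est-fjn} control everything exactly.

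Separately, the theorem asserts that the limit $Y$ is compound-Poisson, and your proof does not deliver this: you assert it parenthetically, but it cannot be read off from the formula for $f_Y$ (the paper explicitly remarks that the compound-Poisson structure is not visible there, and proves it via Theorem \ref{thm:compPoi}). The paper's accompanying-law device, replacing $g_n(s)=\prod_j h_j(f_{j,n}(s))$ by $\widehat g_n(s)=\prod_j e^{h_j(f_{j,n}(s))-1}$ --- which differs from your direct treatment of $\log g_n$ only by the same quadratic errors --- yields it for free, since each $\widehat g_n$ is a compound-Poisson g.f.\ and that class is closed under weak convergence (\cite[Proposition 2.2]{Steutel}). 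You should either adopt this device or add a separate argument for the infinite divisibility of the limit.
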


Under further assumptions we might allow infinitely many nonzero $\lambda$'s.

\begin{theorem}\label{thm:bev2}
Assume that (C1)--(C3) are satisfied and
\begin{itemize}
\item[(C4')] $\lim_{n\to\infty}\frac{m_{n,k}}{k!(1-\fl_n)} = \lambda_k$,
$k=1,2,\ldots$, and
$\limsup_{n \to \infty} \lambda_n^{1/n} \leq 1$.
\end{itemize}
Then, for the BPVE with immigration $(Y_n)$ in \eqref{eq:Y}
\[
Y_n \overset{\mathcal{D}}{\longrightarrow} Y \quad \text{as $n\to\infty$},
\]
where the random variable $Y$ has compound-Poisson distribution, and its g.f.
\begin{equation} \label{eq:Y-genfc}
f_Y(s) = 
\exp \left\{ 
-\sum_{k=1}^{\infty}\frac{2^k\lambda_k}{\nu^k}
\left(\log\left(1+\frac{\nu}{2}(1-s)\right) 
+ \sum_{i=1}^{k-1} (-1)^{i}\frac{\nu^i}{i2^i}
(1-s)^i\right)\right\}.
\end{equation}
\end{theorem}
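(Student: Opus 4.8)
The plan is to prove the convergence through probability generating functions (p.g.f.'s), exactly as in the proof of Theorem \ref{thm:bev1}, the only genuinely new ingredient being the control of the now infinite series in the exponent. Write $h_n(s) \coloneqq \E s^{\varepsilon_n}$ for the immigration p.g.f., and for $0 \le k < n$ set $f_{k,n} \coloneqq f_{k+1}\circ f_{k+2}\circ\cdots\circ f_n$, with $f_{n,n} \coloneqq \mathrm{id}$, so that $\fl_{k,n} = \prod_{j=k+1}^n \fl_j$ is its mean. Conditioning on $Y_{n-1}$ in \eqref{eq:Y} gives $g_n(s) \coloneqq \E s^{Y_n} = g_{n-1}(f_n(s))\,h_n(s)$, and unfolding this recursion from $Y_0=0$ yields $g_n(s) = \prod_{k=1}^n h_k(f_{k,n}(s))$. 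By the continuity theorem for p.g.f.'s it suffices to show that $g_n(s)\to f_Y(s)$ for every $s$ in a left neighbourhood of $1$, together with $f_Y(s)\to1$ as $s\uparrow1$; tightness then follows from the Markov-type bound $\p(Y_n>M)\le(1-g_n(s))/(1-s^M)$. I would therefore work with $u\coloneqq 1-s$ small throughout.

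The first step is the shape-function asymptotics of the composition, which is shared with Theorem \ref{thm:bev1}. Writing $\varphi_j$ for the shape function of $f_j$ in the sense of \cite{Kersting}, the identity $f_{k,n}=f_{k+1}\circ f_{k+1,n}$ gives the recursion
\[
\frac{1}{1-f_{k,n}(s)} = \frac{1}{\fl_{k+1}(1-f_{k+1,n}(s))} + \varphi_{k+1}(f_{k+1,n}(s)),
\]
which I would solve explicitly to obtain
\[
\frac{1}{1-f_{k,n}(s)} = \frac{1}{\fl_{k,n}(1-s)} + \sum_{j=k+1}^n \frac{\varphi_j(f_{j,n}(s))}{\fl_{k,j-1}}.
\]
Using (C2) and (C3) to replace $\varphi_j$ by $\varphi_j(1)=f_j''(1)/(2\fl_j^2)\sim \tfrac{\nu}{2}(1-\fl_j)$ and telescoping $\sum_j(1-\fl_j)/\fl_{k,j-1}\approx \fl_{k,n}^{-1}-1$, this leads to
\[
1-f_{k,n}(s) = \frac{2\,\fl_{k,n}\,(1-s)}{2+\nu(1-s)(1-\fl_{k,n})}\,(1+o(1)),
\]
uniformly in $k$, with the $o(1)$ controlled as $n\to\infty$. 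Moreover convexity of each $f_j$ gives $f_{k,n}(s)\ge s$, hence the uniform bound $u_k\coloneqq 1-f_{k,n}(s)\le u<1$, which I will need below.

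Next I would take logarithms, $\log g_n(s)=\sum_{k=1}^n\log h_k(f_{k,n}(s))$, use $\log h_k = -(1-h_k)+O((1-h_k)^2)$ together with the factorial-moment expansion $1-h_k(t)=\sum_{j\ge1}(-1)^{j+1}\frac{m_{k,j}}{j!}(1-t)^j$ to write
\[
\log g_n(s) = -\sum_{j\ge1}(-1)^{j+1}\sum_{k=1}^n \frac{m_{k,j}}{j!}\,u_k^{\,j} + o(1).
\]
Invoking (C4'), $m_{k,j}/j!\approx\lambda_j(1-\fl_k)$, and recognising $\sum_k(1-\fl_k)(\cdot)$ as a Riemann sum for the measure $\dd c/c$ under the substitution $c=\fl_{k,n}$ (so that $(1-\fl_k)\approx\dd S_k=\dd c/c$, with $S_m=\sum_{i\le m}(1-\fl_i)$), the inner sum converges to $(2u)^j\int_0^1 c^{\,j-1}(2+\nu u(1-c))^{-j}\,\dd c$. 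A direct evaluation of this elementary integral, via the substitution $v=2+\nu u(1-c)$ or an induction on $j$, reproduces exactly the $j$th summand of \eqref{eq:Y-genfc}; I would verify $j=1,2$ by hand to fix the constants.

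The main obstacle, and the only new point relative to Theorem \ref{thm:bev1}, is the justification of interchanging $\lim_n$, $\sum_k$ and the infinite sum $\sum_j$. Here the hypothesis $\limsup_n\lambda_n^{1/n}\le1$ is precisely what is needed: combined with $u_k\le u<1$ and the already established bound $\sum_k(1-\fl_k)u_k\le \tfrac{2}{\nu}\log(1+\tfrac{\nu}{2}u)<\infty$ (the $j=1$ integral), one obtains the domination $\frac1{j!}\sum_k m_{k,j}u_k^{\,j}\le C\,\lambda_j\,u^{\,j-1}$, whose sum over $j$ converges for $u<1$ because $\sum_j\lambda_j u^{j}$ has radius of convergence at least $1$. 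Dominated convergence then legitimises the termwise passage to the limit, and the same estimate shows $f_Y(s)\to1$ as $u\to0$, since each exponent term is $O(u^j)$. I would finally organise the error analysis so that the $O((1-h_k)^2)$ remainders and the shape-function replacement errors are absorbed into a single $o(1)$, and note that for large $\nu$ the series in \eqref{eq:Y-genfc} converges only for $u<2/\nu$, which is why it is cleanest to establish the convergence for $s$ in a left neighbourhood of $1$ and apply the continuity theorem there.
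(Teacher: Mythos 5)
Your overall architecture (the product formula $g_n(s)=\prod_{k=1}^n h_k(f_{k,n}(s))$, logarithm/accompanying-law reduction, shape-function asymptotics for $1-f_{k,n}(s)$, and the Riemann-sum identification of each term with the integral $(2u)^j\int_0^1 c^{j-1}(2+\nu u(1-c))^{-j}\,\dd c$) matches the paper's proof, and for each fixed $k$-level this is exactly the argument of Theorem \ref{thm:bev1}. The gap is precisely in the part you identify as the only new ingredient: the treatment of the infinite series. First, the expansion $1-h_k(t)=\sum_{j\geq 1}(-1)^{j+1}\frac{m_{k,j}}{j!}(1-t)^j$ is not available under (C4'): that hypothesis makes all factorial moments of each $\varepsilon_k$ finite and controls the \emph{limits} $\lambda_j$, but it does not give $\varepsilon_k$ any exponential moments, so the Taylor series of $h_k$ at $1$ may have radius of convergence zero and the identity can fail for every $t<1$, no matter how small $u=1-s$ is. The paper never writes this infinite expansion; it truncates at a finite level $\ell$ via Lemma \ref{lemma:hjs}, whose remainder bound $|R_{j,\ell}|\leq \frac{m_{j,\ell}}{\ell!}|1-t|^\ell$ involves only the single moment level $\ell$. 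Second, your domination $\frac{1}{j!}\sum_k m_{k,j}u_k^{\,j}\leq C\lambda_j u^{\,j-1}$ requires a constant $C$ uniform in $j$, which does not follow from (C4'): the convergence $m_{k,j}/(j!(1-\fl_k))\to\lambda_j$ is pointwise in $j$ with no uniformity, so $\sup_k m_{k,j}/(j!(1-\fl_k))$ can be as large as $\lambda_j+j!$ (say) while each fixed $j$ still converges; then $\sum_j\sup_k(\cdots)u^j=\infty$ for every $u>0$ and dominated convergence cannot be invoked. The paper's $\varepsilon$-argument — choose $\ell$ so that the limiting tail and the level-$\ell$ remainder are each below $\varepsilon$, then use the finite-$k$ convergence — needs only finitely many moment levels at a time and avoids any uniform-in-$j$ bound.

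Your closing remark that for large $\nu$ the series in \eqref{eq:Y-genfc} converges only for $u<2/\nu$ is also wrong, and it signals the missing key estimate. By Taylor's theorem with Lagrange remainder, for some $\xi\in\left(0,\tfrac{\nu}{2}u\right)$,
\begin{equation*}
\log\Bigl(1+\tfrac{\nu}{2}u\Bigr)+\sum_{i=1}^{k-1}(-1)^i\frac{\nu^i u^i}{i2^i}
=\frac{\nu^k u^k}{2^k}\,\frac{1}{k\,(1+\xi)^k},
\qquad\text{hence}\qquad
\frac{2^k}{\nu^k}\Bigl|\log\Bigl(1+\tfrac{\nu}{2}u\Bigr)+\sum_{i=1}^{k-1}(-1)^i\frac{\nu^i u^i}{i2^i}\Bigr|\leq\frac{u^k}{k} .
\end{equation*}
The factor $(1+\xi)^{-k}$ exactly neutralizes the $(\nu/2)^k$ growth, so under $\limsup\lambda_k^{1/k}\leq 1$ the series defining $\log f_Y$ converges for \emph{all} $u\in(0,1)$, and the tail $\sum_{k\geq\ell}\lambda_k u^k/k$ can be made uniformly small — this is the paper's estimate \eqref{eq:inf-sum-1} and the heart of the proof. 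Without it (or your unavailable domination), your argument establishes nothing beyond the finite-sum case, since restricting to a left neighbourhood of $1$ does not repair either of the two issues above: both are insensitive to how small $u$ is.
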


\begin{remark}
If $K=2$ in Theorem \ref{thm:bev1} then $Y$ has negative binomial 
distribution with parameters 
$\frac{2\lambda_1}{\nu}$ and $\frac{2}{2+\nu}$ as shown in Theorem 5 of \cite{Kevei}.
In particular, Theorems \ref{thm:bev1} and \ref{thm:bev2} are far-reaching
generalizations of \cite[Theorem 5]{Kevei}.
\end{remark}

\begin{remark}
If $\nu = 0$ in condition (C2) in Theorem \ref{thm:yaglom-geom},
then the results in Theorems \ref{thm:bev1} and \ref{thm:bev2} follows 
from \cite[Theorem 4]{Kevei}.
\end{remark}

\begin{remark}
Conditions (C4) and (C4') above are not comparable in 
the sense that none of them implies the other. Indeed, if (C4) holds 
with $K = 3$, then it is possible that 4th moments does not exists,
while in (C4') all moments have to be finite.

One can construct examples for which $\lambda_2 = 0$, $\lambda_3=1$,
$\lambda_4 = \infty$. In this case (C4) in Theorem \ref{thm:bev1} holds
with $K= 2$. On the other hand, it is easy to show that if 
$\lambda_{n_1} = \lambda_{n_2} = 0$, for some $n_1 < n_2$, 
then $\lambda_n = 0$ for $n_1 < n < n_2$.
\end{remark}

The g.f.~$f_Y$ of the limit has a rather complicated form.
In the proof, showing the pointwise convergence of the g.f.s, we 
prove that the accompanying laws converge in distribution to $Y$.
Since the accompanying laws are compound-Poisson, this implies
(see e.g.~Steutel and van Harn \cite[Proposition 2.2]{Steutel}) that 
the limit $Y$ is compound-Poisson too. That is,
$Y = \sum_{i=1}^N Z_i$, where $Z, Z_1, Z_2, \ldots$ are iid nonnegative 
integer valued random variables, and independently $N$ has Poisson
distribution. This is an important class of distributions, since 
it is exactly the class of infinitely divisible distributions on the nonnegative 
integers, see e.g.~\cite[Theorem II.3.2]{Steutel}.
Interestingly, from the form of the g.f.~$f_Y$ it is difficult to deduce that 
it is compound-Poisson, because the logarithm is given as a 
power series in $1-s$, while for the compound-Poisson this is a power series in $s$. 
Under some conditions on the sequence $(\lambda_n)$ we can rewrite 
$\log f_Y$ to a series expansion in $s$, which allows us to understand the 
structure of the limit.

\begin{theorem}\label{thm:compPoi}
Assume one of the followings: 
\begin{itemize}
\item[(i)] conditions of Theorem \ref{thm:bev1} hold; or
\item[(ii)] conditions of Theorem \ref{thm:bev2} hold and 
$\limsup_{n\to \infty} \lambda_n^{1/n} \leq 1/2$.
\end{itemize}
Then the limiting g.f.~in Theorems \ref{thm:bev1} and \ref{thm:bev2}
can be written as
\[
f_Y(s) = \exp \left\{ \sum_{n=1}^\infty A_n (s^n - 1) \right\},
\]
where
\[
A_n = \frac{\nu^n}{n (2+\nu)^n}
\sum_{j=1}^\infty q_j \left[ \left( 1 + \frac{2}{\nu} \right)^{\min(j,n)} - 1 
\right],
\]
and 
\[
q_j = \lim_{n\to\infty} \frac{\p (\varepsilon_n = j)}{1 - \fl_n}, \quad j=1,2,\ldots.
\]
\end{theorem}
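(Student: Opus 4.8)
The plan is to pass from the expansion of $\log f_Y$ in powers of $(1-s)$ to an expansion in powers of $s$ by reducing everything to a single immigration value and then summing. Throughout I set $a=\nu/2$ and $\beta=\nu/(2+\nu)$, so that $1-\beta=2/(2+\nu)=1/(1+a)$ and $\beta^{-1}=1+2/\nu$. With $(n-j)^+:=\max(n-j,0)$ one has $(n-j)^+=n-\min(j,n)$, hence $\beta^{(n-j)^+}-\beta^n=\beta^n\big((1+2/\nu)^{\min(j,n)}-1\big)$, and the target assertion is exactly $A_n=\tfrac1n\sum_{j\ge1}q_j\big(\beta^{(n-j)^+}-\beta^n\big)$.

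First I would record the bridge between the $\lambda_k$ and the $q_j$, namely the existence of the limits together with $\phi(s):=\sum_{k\ge1}\lambda_k(s-1)^k=\sum_{j\ge1}q_j(s^j-1)$, equivalently $\lambda_k=\sum_{j\ge k}\binom jk q_j$. Writing $a_{n,j}=\p(\varepsilon_n=j)/(1-\fl_n)$, the identity $m_{n,k}/k!=\sum_{j\ge k}\binom jk \p(\varepsilon_n=j)$ gives $\sum_{j\ge k}\binom jk a_{n,j}\to\lambda_k$. In case (i) this is elementary: $\lambda_K=0$ forces $a_{n,j}\to q_j=0$ for $j\ge K$ (nonnegative terms with vanishing sum), and then $q_1,\dots,q_{K-1}$ are recovered from the finite invertible system $\sum_{j<K}\binom jk a_{n,j}\to\lambda_k$, $k<K$. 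In case (ii) I would instead expand the immigration g.f.\ both at $s=1$ and at $s=0$, note that $\frac{f_{\varepsilon_n}(s)-1}{1-\fl_n}=\sum_{j\ge1}a_{n,j}(s^j-1)\to\phi(s)$, and extract the $q_j$ from this convergence of (sub)probability generating functions with nonnegative coefficients and convergent total mass $\sum_j a_{n,j}\to-\phi(0)$.

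Next I would simplify $\log f_Y$ and reduce to a single $j$. Since $\log(1+a(1-s))=-\sum_{i\ge1}(-1)^i\frac{a^i}{i}(1-s)^i$, the bracket in \eqref{eq:Y-genfc} is exactly the tail $-\sum_{i\ge k}(-1)^i\frac{a^i}{i}(1-s)^i$, so $\log f_Y(s)=\sum_k\lambda_k g_k(s)$ with $g_k(s)=\sum_{i\ge k}\frac{(-1)^i}{i}a^{i-k}(1-s)^i$. Differentiating termwise yields the clean closed form $g_k'(s)=(s-1)^{k-1}/(1-a(s-1))$, whence $(\log f_Y)'(s)=\frac{1}{1-a(s-1)}\sum_k\lambda_k(s-1)^{k-1}=\frac{\phi(s)}{(s-1)(1-a(s-1))}$. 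Substituting $\phi(s)=\sum_j q_j(s^j-1)$ and integrating from $1$ to $s$ (both sides vanish at $s=1$) gives $\log f_Y(s)=\sum_{j\ge1}q_j F_j(s)$, where $F_j(s)=\int_1^s\frac{w^j-1}{(w-1)(1-a(w-1))}\,dw$. The single-$j$ computation is the technical core: using $\frac{w^j-1}{w-1}=\sum_{m=0}^{j-1}w^m$ and $1-a(w-1)=(1+a)(1-\beta w)$ I get $F_j(s)=(1-\beta)\int_1^s\frac{\sum_{m=0}^{j-1}w^m}{1-\beta w}\,dw$; expanding $\frac1{1-\beta w}=\sum_{\ell\ge0}\beta^\ell w^\ell$ (legitimate as $\beta<1$, $w\in[0,1]$) and integrating term by term yields, after the elementary case split $n\le j$ versus $n>j$ in the geometric sums, $F_j(s)=\sum_{n\ge1}\frac{\beta^{(n-j)^+}-\beta^n}{n}(s^n-1)$. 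Summing against $q_j$ then gives the claimed $A_n$.

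The main obstacle is justifying the interchanges of summation. In case (i) every sum over $j$ is finite and there is nothing to check. In case (ii) the delicate point is the passage to the $s$-expansion: via $\lambda_k=\sum_{j\ge k}\binom jk q_j$ the hypothesis $\limsup_n\lambda_n^{1/n}\le1/2$ translates into $\limsup_j q_j^{1/j}\le1/3$, so that the $q$-generating function is analytic in a disc of radius $>2$ and in particular $\sum_j q_j 2^j<\infty$. This is precisely what makes $\sum_k\lambda_k g_k$, its termwise derivative, and the double series $\sum_j q_j\sum_n \frac{\beta^{(n-j)^+}-\beta^n}{n}(s^n-1)$ converge absolutely and uniformly on a neighbourhood of $[0,1]$, legitimising the termwise differentiation, the substitution of the $q$-representation, and the final reindexing. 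I would control these by the crude estimates $|g_k(s)|\le (1-s)^k/k$ when $a\le1$ (and a $k$-uniform bound via the closed form when $a>1$), together with $\sum_k\binom jk(1-s)^k\le 2^j$, so that the relevant double sums are dominated by $\sum_j q_j 2^j<\infty$.
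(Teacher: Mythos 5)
Your proposal is correct, and it takes a genuinely different route from the paper's proof. The paper stays entirely within double-series manipulations: it expands $\log\left(1+\frac{\nu}{2}(1-s)\right)$ and the powers $(1-s)^i$ into powers of $s$, interchanges the $k$- and $n$-sums via Taylor-remainder estimates for $(1+x)^{-n}$, passes from the $\lambda_k$ to the $q_j$ by Stirling-number inversion (existence of the $q_j$ being established in Lemma \ref{lemma:limit} through the Stieltjes moment problem and Carleman's condition), and then needs the separate combinatorial identity of Lemma \ref{lemma:binom-3}, proved by induction, to collapse the resulting double binomial sum into the closed form for $A_n$. Your differentiate-then-integrate scheme bypasses both of these lemmas: the closed form $(\log f_Y)'(s)=\phi(s)\big/\bigl((s-1)(1+\frac{\nu}{2}(1-s))\bigr)$ turns the tails of the logarithm series into a rational function, and the one-variable integral $F_j$ is evaluated by a geometric expansion, which is where the $\min(j,n)$ structure of $A_n$ emerges naturally; likewise your extraction of the $q_j$ from pointwise convergence of $(h_j(s)-1)/(1-\fl_j)$ (power series with nonnegative coefficients, so pointwise convergence on an interval plus the uniform geometric tail bound forces coefficient convergence) replaces the moment-problem argument, at the price of using the hypotheses of cases (i)/(ii), whereas Lemma \ref{lemma:limit} works under (C4') alone. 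Two assertions in your sketch need to be written out. First, the translation $\limsup_k \lambda_k^{1/k}\le 1/2 \Rightarrow \limsup_j q_j^{1/j}\le 1/3$ is true but not automatic: by Fatou, $q_j\binom{j}{k}\le\lambda_k$ for every $k\le j$, and optimizing at $k\approx 2j/3$ with the entropy bound $\binom{j}{\lceil 2j/3\rceil}\ge 3^j 4^{-j/3}/(j+1)$ gives the constant $1/3$; the lazier choice $k=\lceil j/2\rceil$ already yields $\limsup_j q_j^{1/j}\le 1/(2\sqrt{2})<1/2$, which is all your domination argument actually requires. Second, the final reindexing $\sum_j q_j F_j(s)=\sum_n A_n(s^n-1)$ is cleanest by Tonelli rather than by domination: every term $q_j\,\frac{\beta^{(n-j)^+}-\beta^n}{n}(1-s^n)$ is nonnegative, and the iterated sum is finite because $|F_j(s)|\le j(1-s)$ and $\sum_j j q_j=\lambda_1<\infty$, so no growth condition on $(q_j)$ is needed at that step. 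With these two points filled in, your argument is a complete and arguably more transparent alternative to the paper's proof.
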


\begin{remark}
In fact, the last limit exists without any extra conditions on the $(\lambda_n)_n$
sequence, see Lemma \ref{lemma:limit}. Furthermore, it is clear that under 
(C4) $q_k = 0$ for $k \geq K$.

The form of $f_Y$ also implies that the limit has the representation
$Y = \sum_{n=1}^\infty n N_n$, where $(N_n)$ are independent Poisson random variables,
such that $N_n$ has parameter $A_n$.
\end{remark}

\section{Proofs} \label{sec:proofs}

\subsection{Preparation}

In the proofs we analyze the g.f.~of the underlying processes. 
To prove distributional convergence for nonnegative integer 
valued processes, it is enough to prove the pointwise convergence of the 
g.f., and show that the limit is a g.f.~as well,
see Feller \cite[p.280]{Feller1}.

Recall that $f_n(s) = \E s^{\xi_n}$ stands for the offspring 
g.f.~in generation $n$. For the composite g.f.~introduce 
the notation $f_{n,n}(s) = s$, for $j < n$ 
\[
f_{j,n}(s) \coloneqq f_{j+1} \circ \ldots \circ  f_n(s),
\]
and for the corresponding means $\fl_{n,n} = 1$,
\[
\fl_{j,n} \coloneqq \fl_{j+1}\ldots \fl_n, \quad j <n.
\]
Then it is well-known that $\E s^{X_n} = f_{0,n}(s)$ and 
$\E X_n = \fl_{0,n}$.

For a g.f.~$f$, with mean $\fl$, define the 
\emph{shape function} as
\begin{equation} \label{eq:def-phi}
\varphi(s) = \frac{1}{1 - f(s)} - \frac{1}{\fl (1-s)},  
\ 0 \leq s < 1, \quad
\varphi(1) = \frac{f''(1)}{2 f'(1)^2}.
\end{equation}
Let $\varphi_j$ be the shape function of $f_j$.
By definition of $f_{j,n}$
\[
\begin{split}
\frac{1}{1 - f_{j,n} (s)} % & = 
%\frac{1}{1 - f_{j+1}(f_{j+1,n}(s))} \\
& = \frac{1}{\fl_{j+1} (1 
- f_{j+1,n}(s))} + \varphi_{j+1}(f_{j+1,n}(s)),
\end{split}
\]
therefore iteration gives (\cite[Lemma 5]{Kersting}, \cite[Proposition 1.3]{KerVat})
\begin{equation} \label{eq:shape}
\frac{1}{1 - f_{j,n} (s)} = \frac{1}{\fl_{j,n}(1 - s)} + \varphi_{j,n}(s),
\end{equation}
where
\begin{equation} \label{eq:phijn}
\varphi_{j,n}(s)\coloneqq\sum_{k=j+1}^{n} \frac{\varphi_k(f_{k,n}(s))}{\fl_{j,k-1}}.
\end{equation}
The latter formulas show the usefulness of the shape function.
The next statement gives precise upper and lower bounds on the shape function.

\begin{lemma}[{\cite[Lemma 1]{Kersting}, \cite[Proposition 1.4]{KerVat}}]
\label{lemma:shape}
Assume $0<\fl<\infty$, $f''(1)<\infty$ and let $\varphi(s)$ be the shape function of $f$. Then, for $0\leq s\leq 1$,
\[
\frac{1}{2}\varphi(0)\leq \varphi(s)\leq 2\varphi(1).
\]
\end{lemma}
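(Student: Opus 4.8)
The plan is to turn the implicit shape function into an explicit ratio of two power series with monotone coefficients and read both estimates off that structure. Writing $f(s)=\E s^{\xi}=\sum_k p_k s^k$ and $a_j=\p(\xi>j)$, introduce
\[
\psi(s)=\frac{1-f(s)}{1-s}=\sum_{j\ge 0}a_j s^j,
\qquad
\chi(s)=\frac{\fl-\psi(s)}{1-s}=\sum_{i\ge 0}b_i s^i,
\quad b_i=\sum_{j>i}a_j .
\]
A one-line computation gives $\varphi(s)=\chi(s)/\bigl(\fl\,\psi(s)\bigr)$. Since $(a_j)$ and $(b_i)$ are nonincreasing, $\psi$ and $\chi$ are nondecreasing on $[0,1]$, and $\psi(0)=1-f(0)$, $\psi(1)=\fl$, $\chi(1)=f''(1)/2$; substituting these reproduces $\varphi(0)$ and $\varphi(1)$, and the two assertions become the polynomial inequalities
\[
\fl\,\chi(s)\le 2\chi(1)\,\psi(s)
\qquad\text{and}\qquad
\chi(0)\,\psi(s)\le 2\psi(0)\,\chi(s),\qquad s\in[0,1].
\]

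Everything rests on the single scalar inequality $\varphi(0)\le 2\varphi(1)$, equivalently $\bigl(\sum_j a_j\bigr)^{2}\le a_0\sum_j (2j+1)a_j$, which I would prove by the elementary pairing bound $a_ja_k\le a_0\,a_{\max(j,k)}$ (valid because $a_{\min(j,k)}\le a_0$) together with the count $\#\{(j,k):\max(j,k)=m\}=2m+1$. It is worth noting at the outset \emph{why} the factor $2$ cannot be removed: $\varphi$ is not monotone. Depending on whether the offspring law has a tail lighter or heavier than geometric, the ratio $\chi/\psi$ decreases or increases across $[0,1]$ (it is constant for geometric offspring), so neither endpoint dominates, and in particular no coefficientwise comparison of the two sides above can succeed — the relevant coefficient differences change sign.

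For the uniform bounds both target inequalities are already polynomial in $s$, so I would apply Abel summation, reducing each to the nonnegativity of its partial sums. The lower bound comes out cleanly this way: its partial sums are $2a_0\sum_{m\le M}b_m-b_0\sum_{m\le M}a_m$, and the pairing bound $a_ja_k\le a_0\,a_{\max(j,k)}$ applied on the truncated index range shows each is at least $a_0(M+1)b_M\ge 0$, whence the lower estimate follows.

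The upper bound is the main obstacle. The same Abel reduction produces the partial sums $2\chi(1)\sum_{n\le M}a_n-\fl\sum_{n\le M}b_n$, but these resist the pairing argument because $\chi(1)=\sum_i b_i$ and $\fl=\sum_j a_j$ are full sums rather than the single coefficient $a_0$; the tail then does not pair against $a_0$. Here my plan is instead to regard $2\chi(1)\psi(s)-\fl\chi(s)$ as a quadratic form $\sum_{\alpha,\beta}C_{\alpha\beta}(s)\,a_\alpha a_\beta$ in the tail probabilities, and—using $a_\alpha=\sum_{\gamma\ge\alpha}p_{\gamma+1}$ with $p_{\gamma+1}\ge 0$—to sum by parts in both indices, reducing the claim to the nonnegativity of the two-dimensional prefix sums $\tilde C_{\gamma\eta}(s)=\sum_{\alpha\le\gamma,\,\beta\le\eta}C_{\alpha\beta}(s)$. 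These are explicit and piecewise polynomial in $s$, and turn out to be nonnegative (the leading behaviour being that of a perfect square in $\gamma-\eta$); carrying out and verifying this bookkeeping is the one genuinely technical step, and it is where I expect the real work to lie.
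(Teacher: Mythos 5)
Note first that the paper never proves this lemma at all: it is imported verbatim from \cite[Lemma 1]{Kersting} and \cite[Proposition 1.4]{KerVat}, so there is no internal proof to compare against and your argument must stand on its own. Its first half does. The identity $\varphi=\chi/(\fl\,\psi)$ is correct, the endpoint evaluations $\psi(0)=1-f(0)$, $\psi(1)=\fl$, $\chi(1)=f''(1)/2$ are right, and the lemma is indeed equivalent to the two inequalities $\fl\,\chi(s)\le 2\chi(1)\psi(s)$ and $\chi(0)\psi(s)\le 2\psi(0)\chi(s)$ on $[0,1]$. Your lower bound is a complete proof: I checked that the Abel partial sums $2a_0\sum_{m\le M}b_m-b_0\sum_{m\le M}a_m$ do dominate $a_0(M+1)b_M\ge 0$ via the pairing $a_ja_k\le a_0a_{\max(j,k)}$, and nonnegative partial sums on $[0,1)$ plus continuity at $s=1$ give the claim; the sharpness remarks (equality for $\xi\in\{0,2\}$) are also correct.

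The upper bound, however, is a genuine gap --- and it is both the harder half and the only half the paper actually invokes (proof of Lemma \ref{lemma:f-conv}). You reduce it to nonnegativity of the doubly summed-by-parts prefix sums $\widetilde{C}_{\gamma\eta}(s)$ and then assert that these ``turn out to be nonnegative,'' explicitly deferring the verification. That assertion \emph{is} the content of the inequality, and it is not bookkeeping: taking the symmetric kernel and writing $G=\gamma+1\ge H=\eta+1$, one computes
\[
2\widetilde{C}_{\gamma\eta}(s)=G\Bigl[\sum_{i=0}^{H-2}(G-H+i)s^i+(G-1)s^{H-1}\Bigr]
+H\Bigl[\sum_{i=0}^{G-2}(H-G+i)s^i+(H-1)s^{G-1}\Bigr],
\]
whose coefficients in the range $H\le i\le G-H-1$ are strictly negative whenever $G\ge 2H+1$. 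So positivity on $[0,1]$ is itself a nontrivial one-sided polynomial inequality; it is true (the constant term $(G-H)^2$ together with the other low-order terms absorbs the negative block), but proving it requires exactly the kind of argument your proposal stops short of, and it is more than the ``perfect square'' leading behaviour you cite, which only accounts for the value at $s=0$. As written, you have established $\varphi(s)\ge\frac12\varphi(0)$ but not $\varphi(s)\le 2\varphi(1)$. A shorter route to close the gap: symmetrize over the offspring weights $p_jp_k$ rather than over the tails. Since $(1-s^j)/j$ is nonincreasing in $j$, one has $\fl(1-s)-(1-f(s))=(1-s)\sum_k p_k\sum_{j<k}(1-s^j)\le(1-s)\sum_kp_k\frac{k-1}{2}(1-s^k)$, and then $\varphi(s)\le 2\varphi(1)$ reduces to $\fl\sum_kp_k\frac{k-1}{2}(1-s^k)\le f''(1)\sum_kp_k(1-s^k)$, which follows by summing against $p_jp_k$ the elementary pairwise inequality $\frac j2(2j-k-1)(1-s^k)+\frac k2(2k-j-1)(1-s^j)\ge0$, itself a consequence of the same monotonicity of $(1-s^j)/j$.
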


For further properties of shape functions we refer to 
\cite[Chapter 1]{KerVat} and to \cite{Kersting}.

We frequently use the following two simple lemmas from \cite{GyIPV}. The first 
is an extension of the Toeplitz summation lemma, and the second follows from 
the Taylor expansion of the g.f.

\begin{lemma}[{\cite[Lemma 5]{GyIPV}}] \label{lemma:anjk}
Let $(\fl_n)_{n\in\N}$ be a sequence of real numbers satisfying
(C1). Let 
\[
a_{n,j}^{(k)}=(1-\fl_j)\prod_{i=j+1}^{n}\fl_i^k, \quad n,j,k\in\N, \quad j\leq n.
\]
Then, 
$\lim_{n \to \infty} \max_{1\leq j\leq n} a_{n,j}^{(1)}  = 0$, and
for an arbitrary sequence $(x_n)_{n\in\N}$ of real numbers with 
$\lim_{n\to\infty}x_n = x \in \R$, for any $k\in\N$,
\[
\sum_{j=1}^{n} a_{n,j}^{(k)}x_j\to \frac{x}{k}, \quad n \to \infty.
\]
\end{lemma}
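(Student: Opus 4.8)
The plan is to treat the statement as a Toeplitz summation result, first recording the elementary consequence of (C1) that drives everything. Writing $\fl_{m,n} = \prod_{i=m+1}^n \fl_i$ as in the text, the bound $\log \fl_{m,n} = \sum_{i=m+1}^n \log \fl_i \le -\sum_{i=m+1}^n (1-\fl_i)$ together with $\sum_n (1-\fl_n) = \infty$ shows that $\fl_{m,n} \to 0$ as $n\to\infty$ for every fixed $m$; in particular $\fl_{0,n} \to 0$. Since $\fl_n \uparrow 1$, only finitely many factors can vanish, so the logarithm is harmless for $n$ large. This single fact yields both the vanishing of individual entries and the exact row-sum limit below.

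For the first claim I would split the maximum according to whether $j$ is large or small. Fixing $\epsilon > 0$, choose $N_0$ with $1-\fl_j < \epsilon$ for $j > N_0$; then for such $j$, $a_{n,j}^{(1)} = (1-\fl_j)\fl_{j,n} \le 1-\fl_j < \epsilon$. For $j \le N_0$ one has $a_{n,j}^{(1)} \le \fl_{j,n} \le \fl_{N_0,n}$, and $\fl_{N_0,n} \to 0$ by the preceding paragraph, so $\fl_{N_0,n} < \epsilon$ for all $n$ beyond some $N_1$. Hence $\max_{1\le j\le n} a_{n,j}^{(1)} < \epsilon$ for $n > N_1$, giving the stated limit.

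For the weighted sum the key step is to reweight $a_{n,j}^{(k)}$ into an array whose row sums telescope exactly. Put $c_j = 1 + \fl_j + \cdots + \fl_j^{k-1}$, so that $1-\fl_j^k = c_j(1-\fl_j)$ and $c_j \to k$ by (C1). Using $\fl_{j-1,n}^k = \fl_j^k \fl_{j,n}^k$ I obtain the identity $\fl_{j,n}^k - \fl_{j-1,n}^k = (1-\fl_j^k)\fl_{j,n}^k = c_j\, a_{n,j}^{(k)}$, whence summation telescopes:
\[
\sum_{j=1}^n c_j\, a_{n,j}^{(k)} = \fl_{n,n}^k - \fl_{0,n}^k = 1 - \fl_{0,n}^k \longrightarrow 1 .
\]
Thus $w_{n,j} \coloneqq c_j\, a_{n,j}^{(k)}$ is a nonnegative array with row sums tending to $1$ and with $w_{n,j} \to 0$ for each fixed $j$, because $\fl_{j,n}\to0$. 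The classical Toeplitz lemma then gives $\sum_{j=1}^n w_{n,j}\, y_j \to y$ for any sequence with $y_j \to y$; applying it to $y_j = x_j/c_j$, which converges to $x/k$, yields
\[
\sum_{j=1}^n a_{n,j}^{(k)} x_j = \sum_{j=1}^n w_{n,j}\, \frac{x_j}{c_j} \longrightarrow \frac{x}{k},
\]
as required.

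The main obstacle is precisely this reweighting idea: the array $a_{n,j}^{(k)}$ itself is the natural object, but its row sums do not telescope, so a direct evaluation of $\lim_n \sum_j a_{n,j}^{(k)}$ would be awkward. Multiplying by the factor $c_j = (1-\fl_j^k)/(1-\fl_j) \to k$ repairs this, producing an exact telescoping sum, and absorbing $1/c_j$ into the test sequence then converts the factor $1/k$ into the limit. The remaining verification that $(w_{n,j})$ meets the Toeplitz hypotheses, and the standard $\epsilon$-splitting proof of the Toeplitz conclusion itself, separating a fixed finite block where the entries vanish from the tail where $|y_j - y|$ is small and the row sums are bounded, are routine.
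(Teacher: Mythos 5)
The paper itself offers no proof of this lemma --- it is quoted directly from \cite{GyIPV} (Lemma 5 there) --- so there is no internal argument to compare against; your proposal stands or falls on its own, and it stands. The two key steps are both correct: the algebraic identity $c_j\,a_{n,j}^{(k)} = \fl_{j,n}^k - \fl_{j-1,n}^k$ with $c_j=(1-\fl_j^k)/(1-\fl_j)\to k$, which makes the reweighted row sums telescope to $1-\fl_{0,n}^k\to 1$, and the observation that $\fl_{m,n}\to 0$ for each fixed $m$ (forced by $\sum_n(1-\fl_n)=\infty$ via $\log \fl_i\le -(1-\fl_i)$), which gives both the columnwise vanishing needed for Toeplitz and the first assertion about $\max_j a_{n,j}^{(1)}$. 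Absorbing $1/c_j$ into the test sequence is exactly the right way to produce the factor $1/k$, and this telescoping-plus-Toeplitz route is the standard (and, as far as one can tell, the original) proof of the cited lemma. One point you should make explicit: several of your inequalities ($a_{n,j}^{(1)}\le 1-\fl_j$, $\fl_{j,n}\le\fl_{N_0,n}$, $c_j\ge 1$, nonnegativity of the array $w_{n,j}$, and the harmlessness of the logarithm) silently use $\fl_j\ge 0$, which the hypothesis ``real numbers satisfying (C1)'' does not literally grant. Since $\fl_n\to 1$, at most finitely many terms can be negative or zero, and their columns contribute nothing in the limit, so the fix is trivial --- and in the paper's setting the $\fl_n$ are offspring means, hence automatically in $[0,1)$ --- but a complete proof of the lemma as stated should say this in one sentence rather than leave it implicit.
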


\begin{lemma}[{\cite[Lemma 6]{GyIPV}}] \label{lemma:hjs}
Let $\varepsilon$ be a nonnegative integer-valued random variable with 
factorial moments 
\[
m_{k} \coloneqq \E [\varepsilon(\varepsilon-1) \ldots (\varepsilon-k+1)], 
\quad k\in \N,
\]
$m_{0} \coloneqq 1$ and with g.f.~$h(s) = \E s^{\varepsilon}$, $s\in[-1,1]$. 
If $m_{\ell}<\infty$ for some $\ell \in \N$ with $|s|\leq 1$, then
\[
h(s) = \sum_{k=0}^{\ell-1} 
\frac{m_{k}}{k!}(s-1)^k + R_{\ell}(s), \quad |R_{\ell}(s)| \leq 
\frac{m_{\ell}}{\ell!}|s-1|^\ell.
\]
\end{lemma}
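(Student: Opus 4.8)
Throughout write $c=\nu/2$ and $\rho=\nu/(2+\nu)=c/(1+c)$ (so $\rho^{-1}=1+2/\nu$, and $\nu>0$ is implicit in the statement), and abbreviate the $k$-th summand bracket as
\[
B_k(s)=\log\bigl(1+c(1-s)\bigr)+\sum_{i=1}^{k-1}(-1)^i\frac{c^i}{i}(1-s)^i,
\]
so that the g.f.\ of Theorem \ref{thm:bev2} reads $\log f_Y(s)=-\sum_{k\ge1}(2/\nu)^k\lambda_k B_k(s)$; the finitely supported situation of Theorem \ref{thm:bev1} will be the special case $\lambda_k=0$ for $k\ge K$, so it is enough to rearrange this series. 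The plan is to rewrite it as a power series in $s$. First I would record the relation $\lambda_k=\sum_{j\ge k}\binom jk q_j$, obtained from $m_{n,k}/(k!(1-\fl_n))=\sum_{j\ge k}\binom jk\,\p(\varepsilon_n=j)/(1-\fl_n)$ by letting $n\to\infty$ (existence of the $q_j$ and admissibility of this interchange are supplied by Lemma \ref{lemma:limit}). The essential computational device is to avoid expanding $B_k$ in powers of $(1-s)$ — that expansion diverges once $c(1-s)>1$, i.e.\ for large $\nu$ and $s$ near $0$ — and instead to differentiate: summing the resulting finite geometric series gives the exact identity $B_k'(s)=-c\,(-c(1-s))^{k-1}/(1+c(1-s))$, valid for all $s<1+1/c$. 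Since $B_k(1)=0$, integrating back yields the representation
\[
B_k(s)=\int_s^1\frac{c\,(-c(1-u))^{k-1}}{1+c(1-u)}\,\dd u,
\]
with no convergence restriction on $s\in[0,1]$.

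Next I would substitute $\lambda_k=\sum_{j\ge k}\binom jk q_j$ and interchange the $k$- and $j$-summations to group the series by $j$:
\[
\log f_Y(s)=-\sum_{j\ge1}q_j\,\Psi_j(s),\qquad \Psi_j(s)=\sum_{k=1}^{j}\binom jk(2/\nu)^kB_k(s).
\]
The decisive point is that for fixed $j$ the inner sum over $k$ is \emph{finite}. Inserting the integral form of $B_k$ and summing $\sum_{k=1}^j\binom jk(2/\nu)^k(-c(1-u))^{k-1}$ as a binomial (using $(2/\nu)c=1$) telescopes to $(u^j-1)/(-c(1-u))$; cancelling the factor $(1-u)$ and writing $1+c(1-u)=(1+c)(1-\rho u)$ gives the clean closed form
\[
\Psi_j(s)=\frac{1}{1+c}\int_s^1\frac{1+u+\cdots+u^{\,j-1}}{1-\rho u}\,\dd u .
\]
Expanding $(1-\rho u)^{-1}$ geometrically (the pole $u=1/\rho>1$ lies outside $[0,1]$), integrating term by term, and using $\Psi_j(1)=0$, I would read off the coefficients
\[
\Psi_j(s)=\sum_{n\ge1}\frac1n\bigl(\rho^n-\rho^{(n-j)_+}\bigr)(s^n-1).
\]

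Assembling, $\log f_Y(s)=\sum_{n\ge1}A_n(s^n-1)$ with $A_n=\frac1n\sum_{j\ge1}q_j\bigl(\rho^{(n-j)_+}-\rho^n\bigr)$. A one-line simplification, using $(n-j)_+-n=-\min(j,n)$ together with $\rho^{-1}=1+2/\nu$ and $\rho^n=\nu^n/(2+\nu)^n$, turns $\rho^{(n-j)_+}-\rho^n$ into $\frac{\nu^n}{(2+\nu)^n}\bigl[(1+2/\nu)^{\min(j,n)}-1\bigr]$, which is exactly the stated formula for $A_n$.

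The main obstacle is justifying the two interchanges of summation under the hypotheses, and here is where the distinction between the two cases enters. From the integral representation one gets the uniform bound $|B_k(s)|\le c^k(1-s)^k/k$, hence $(2/\nu)^k|B_k(s)|\le(1-s)^k/k$ and $\sum_{k=1}^j\binom jk(2/\nu)^k|B_k(s)|\le(2-s)^j-1\le 2^j$ on $[0,1]$; thus the regrouping by $j$ is legitimate as soon as $\sum_j q_j2^j<\infty$. Under the hypotheses of Theorem \ref{thm:bev1} the sequence $(q_j)$ is finitely supported ($q_j=0$ for $j\ge K$), so this is immediate. In case (ii) I would use the identity $\sum_{k\ge1}\lambda_ky^k=Q(1+y)-Q(1)$, where $Q(x)=\sum_jq_jx^j$: since $\limsup_k\lambda_k^{1/k}\le 1/2$ makes $\sum_k\lambda_ky^k$ have radius of convergence at least $2$, $Q$ is analytic on $|x-1|<2$ and in particular $Q(2)=\sum_jq_j2^j<\infty$. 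The same geometric control (and the elementary tail estimate $\sum_{n>j}n^{-1}\rho^{\,n-j}s^n\le \rho/((j+1)(1-\rho))$) makes the final interchange of $\sum_j$ and $\sum_n$ and the convergence of $\sum_nA_n$ routine. I expect the careful bookkeeping of these estimates — ensuring absolute convergence uniformly on $[0,1]$, including at the endpoint $s=0$ where the factor $2^j$ is sharp — to be the delicate part, whereas the algebra of the closed form is straightforward once the integral representation of $B_k$ is in hand.
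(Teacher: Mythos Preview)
Your proposal does not address Lemma~\ref{lemma:hjs} at all. The lemma is an elementary Taylor-remainder statement: expand the generating function $h(s)=\E s^\varepsilon$ about $s=1$ and bound the $\ell$-th order remainder by $m_\ell|s-1|^\ell/\ell!$. The paper does not even give a proof of it (it is quoted from \cite{GyIPV} and said to ``follow from the Taylor expansion of the g.f.''); a two-line argument via $s^j=\sum_{k=0}^{\ell-1}\binom jk(s-1)^k+\binom j\ell(s-1)^\ell\theta_j(s)$ with $|\theta_j(s)|\le1$ and termwise expectation suffices.

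What you have written is instead a (rather nice) sketch of Theorem~\ref{thm:compPoi}: you work with $f_Y$, the brackets $B_k(s)$, the sequences $\lambda_k$ and $q_j$, and the target coefficients $A_n$, and you arrive at exactly the formula claimed in that theorem. Compared to the paper's proof of Theorem~\ref{thm:compPoi}, your route is genuinely different and in some respects cleaner: the paper expands $\log(1+\tfrac{\nu}{2}(1-s))$ and the $(1-s)^i$ directly in powers of $s$, then juggles a double interchange (in $k,n$ and then in $k,j$) using the Taylor-remainder bound \eqref{eq:cpoi-aux1} and the combinatorial identity of Lemma~\ref{lemma:binom-3}. You instead pass through the integral representation $B_k(s)=\int_s^1 c(-c(1-u))^{k-1}/(1+c(1-u))\,\dd u$, group by $j$ first so that each $\Psi_j$ is a \emph{finite} sum in $k$, and obtain the closed form $\Psi_j(s)=(1+c)^{-1}\int_s^1(1+u+\cdots+u^{j-1})/(1-\rho u)\,\dd u$, from which the $A_n$ drop out without any binomial identity. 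The price is your justification of the $k\leftrightarrow j$ interchange: the bound $\sum_{k=1}^j\binom jk(2/\nu)^k|B_k(s)|\le (2-s)^j-1$ forces you to require $\sum_j q_j 2^j<\infty$, which under (ii) needs $\limsup\lambda_k^{1/k}\le 1/2$ --- the same extra hypothesis the paper uses. So the two arguments have the same scope, but yours replaces the combinatorics of Lemma~\ref{lemma:binom-3} by a short integral computation.

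Either way, none of this proves Lemma~\ref{lemma:hjs}; you have targeted the wrong statement.
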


\subsection{Proof of Theorem \ref{thm:yaglom-geom}}

\begin{lemma}\label{lemma:phi}
Let $\varphi_n$ be the shape function of $f_n$. Then
under the condition of Theorem \ref{thm:yaglom-geom} with $\nu > 0$
\[
\lim_{n\to\infty}
\sup_{s \in [0,1]}
\frac{|\varphi_n(1)-\varphi_n(s)|}{1-\fl_n} = 0.
\]
\end{lemma}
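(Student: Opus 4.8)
The plan is to reduce the additive statement to a uniform \emph{multiplicative} comparison $\varphi_n(s) = \varphi_n(1)(1+o(1))$. Indeed, from (C2) one has $\varphi_n(1)/(1-\fl_n) = f_n''(1)/\bigl(2\fl_n^2(1-\fl_n)\bigr) \to \nu/2$, so $\varphi_n(1)/(1-\fl_n)$ stays bounded. Writing
\[
\frac{|\varphi_n(1)-\varphi_n(s)|}{1-\fl_n} = \frac{\varphi_n(1)}{1-\fl_n}\,\left|1 - \frac{\varphi_n(s)}{\varphi_n(1)}\right|,
\]
it therefore suffices to show $\sup_{s\in[0,1)} \bigl|1 - \varphi_n(s)/\varphi_n(1)\bigr| \to 0$; the value $s=1$ contributes $0$ trivially.

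To produce this comparison I would substitute $u = 1-s \in (0,1]$ and use $f_n(1)=1$, $f_n'(1)=\fl_n$ to write
\[
\varphi_n(s) = \frac{\fl_n u - \bigl(1-f_n(1-u)\bigr)}{\bigl(1-f_n(1-u)\bigr)\fl_n u} = \frac{D_n(u)}{\bigl(\fl_n u - D_n(u)\bigr)\fl_n u},
\qquad D_n(u) := \fl_n u - \bigl(1-f_n(1-u)\bigr).
\]
Since $D_n(0)=D_n'(0)=0$ and $D_n''(u)=f_n''(1-u)$, Taylor's theorem with integral remainder gives the representation $D_n(u) = \int_0^u (u-w)\,f_n''(1-w)\,\dd w$. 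The crucial point is that $f_n''$ is nearly constant on $[1-u,1]$: because $f_n'''$ is nonnegative and nondecreasing on $[0,1]$,
\[
0 \le f_n''(1) - f_n''(1-w) = \int_{1-w}^{1} f_n'''(t)\,\dd t \le w\, f_n'''(1),
\]
and substituting this into the integral yields
\[
D_n(u) = \frac{f_n''(1)}{2}\,u^2 - E_n(u), \qquad 0 \le E_n(u) \le \frac{f_n'''(1)}{6}\,u^3.
\]

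Factoring $u^2$ from the numerator and $u$ from each factor of the denominator, one obtains
\[
\frac{\varphi_n(s)}{\varphi_n(1)} = \frac{1 - \dfrac{2E_n(u)/u^2}{f_n''(1)}}{1 - \dfrac{f_n''(1)\,u}{2\fl_n} + \dfrac{E_n(u)/u}{\fl_n}}.
\]
Here is where $\nu>0$ enters: by (C2), $f_n''(1)$ is comparable to $\nu(1-\fl_n)$ and so is bounded below on the scale $1-\fl_n$, while by (C3) $f_n'''(1) = o(1-\fl_n)$. Using the bounds $E_n(u)/u^2 \le f_n'''(1)/6$ and $E_n(u)/u \le f_n'''(1)/6$ uniformly in $u\in(0,1]$, the ratio in the numerator is at most $\tfrac{1}{3}f_n'''(1)/f_n''(1) \to 0$, while $f_n''(1)u \le f_n''(1)\to 0$ and $E_n(u)/u \to 0$; hence numerator and denominator both tend to $1$ uniformly in $u$, giving $\varphi_n(s)=\varphi_n(1)(1+o(1))$ uniformly, as required. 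I expect the main obstacle to be exactly this uniformity over the \emph{whole} interval: near $s=1$ the comparison is mere continuity, but for $s$ bounded away from $1$ it rests on the bound $f_n''(1)-f_n''(1-w)\le w\,f_n'''(1)$ and condition (C3), and one must check that every $o(1)$ is genuinely uniform in $u$ and that division by $\varphi_n(1)$ is legitimate, i.e.\ $f_n''(1)>0$ for all large $n$, which follows from $\nu>0$.
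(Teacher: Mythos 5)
Your proof is correct and follows essentially the same route as the paper: both write $\varphi_n(s)$ as $\varphi_n(1)$ times a ratio of the form $(1+o(1))/(1+o(1))$ via a third-order Taylor expansion of $f_n$ at $1$, bound the remainder by $f_n'''(1)$ using monotonicity of $f_n'''$, and then invoke (C2), (C3) with $\nu>0$ together with the boundedness of $\varphi_n(1)/(1-\fl_n)$. The only difference is cosmetic — you use the integral form of the Taylor remainder where the paper uses the Lagrange form $\frac16 f_n'''(t)(s-1)^3$ — and your attention to the uniformity in $u$ and to $f_n''(1)>0$ for large $n$ matches what the paper's argument implicitly requires.
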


\begin{proof}
To ease notation we suppress the lower index $n$.
By the Taylor expansion
\[
f(s) = 1+\fl \, (s-1)+\frac{1}{2}f''(1)(s-1)^2+\frac{1}{6}f'''(t)(s-1)^3,
\]
for some $ t\in(s,1)$. Thus, recalling \eqref{eq:def-phi} 
the shape function can be written in the form
\[
\varphi(s) = \frac{\fl \, (1-s)-1+f(s)}{\fl \, (1-s)(1-f(s))} = 
\frac{f''(1)}{2 \fl^2} 
\frac{1 - \frac{f'''(t)}{3 f''(1)}(1-s)}
{1 - \frac{f''(1) }{2 \fl}(1-s) +\frac{f'''(t) }{6 \fl}(1-s)^2}.
\]
Therefore,
\[
\frac{\varphi(1)-\varphi(s)}{1-\fl} = \frac{f''(1)}{2\fl^2(1-\fl)} 
\left(1-\frac{1-\frac{f'''(t)}{3 
f''(1)}(1-s)}{1-\frac{f''(1)}{2\fl}(1-s)+\frac{f'''(t)}{6\fl}(1-s)^2}\right).
\]
Using the assumptions and the monotonicity of $f'''$, 
uniformly in $s \in (0,1]$
\[
\lim_{n \to \infty} 
\left[ 
\frac{f_n'''(t)}{f_n''(1)} 
+ \frac{f_n''(1)}{\fl_n} + \frac{f_n'''(t)}{\fl_n} \right] = 0,
\]
thus the statement follows.
\end{proof}

\begin{lemma}\label{lemma:f-conv}
Under the conditions of Theorem \ref{thm:yaglom-geom}, for $s \in [0,1)$
\[
\lim_{n \to \infty}
\frac{\fl_{0,n}}{1-f_{0,n}(s)} = 
\frac{1}{1-s}+\frac{\nu}{2}.
\]
\end{lemma}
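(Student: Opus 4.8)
The plan is to work with the key identity \eqref{eq:shape}, specialized to $j=0$. Rewriting it, we have
\[
\frac{\fl_{0,n}}{1-f_{0,n}(s)} = \frac{1}{1-s} + \fl_{0,n}\,\varphi_{0,n}(s),
\]
so the statement reduces to showing that $\fl_{0,n}\,\varphi_{0,n}(s) \to \nu/2$ for each fixed $s \in [0,1)$. Using \eqref{eq:phijn} with $j=0$, we have
\[
\fl_{0,n}\,\varphi_{0,n}(s) = \sum_{k=1}^{n} \frac{\fl_{0,n}}{\fl_{0,k-1}}\,\varphi_k\bigl(f_{k,n}(s)\bigr) = \sum_{k=1}^{n} \fl_{k-1,n}\,\varphi_k\bigl(f_{k,n}(s)\bigr).
\]
I would like to replace $\varphi_k(f_{k,n}(s))$ by $\varphi_k(1)$ and then recognize the resulting sum as a Toeplitz-type average to which Lemma \ref{lemma:anjk} applies.

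Carrying this out, the natural candidate for the limit comes from observing that $\varphi_k(1) = f_k''(1)/(2\fl_k^2)$, so by (C2) we have $\varphi_k(1)/(1-\fl_k) \to \nu/2$. Writing $\fl_{k-1,n} = \fl_k \prod_{i=k+1}^n \fl_i = \fl_k\,(1-\fl_{k-1})^{-1} a_{n,k-1}^{(1)}$ — or more directly matching the weights $a_{n,k-1}^{(1)} = (1-\fl_{k-1})\prod_{i=k}^n \fl_i$ against $\fl_{k-1,n}$ — the sum $\sum_{k=1}^n \fl_{k-1,n}\,\varphi_k(1)$ should be comparable to $\sum_{k} a_{n,k-1}^{(1)} \cdot \frac{\varphi_k(1)}{1-\fl_{k-1}}$, and since $\varphi_k(1)/(1-\fl_k) \to \nu/2$ while $\fl_k \to 1$, Lemma \ref{lemma:anjk} with $k=1$ yields the limit $\tfrac{1}{1}\cdot\tfrac{\nu}{2} = \nu/2$. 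The factors $\fl_k \to 1$ and the index shift between $k$ and $k-1$ are harmless in the limit and should be absorbed cleanly.

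The main obstacle is justifying the replacement of $\varphi_k(f_{k,n}(s))$ by $\varphi_k(1)$ uniformly enough to preserve the Toeplitz limit. This is exactly where Lemma \ref{lemma:phi} enters: it gives
\[
\sup_{s\in[0,1]} \frac{|\varphi_k(1) - \varphi_k(f_{k,n}(s))|}{1-\fl_k} \to 0,
\]
so the error in the sum is controlled by $\sum_{k=1}^n \fl_{k-1,n}(1-\fl_k)\,o(1)$, where the $o(1)$ is uniform in $s$. Since $\sum_{k=1}^n \fl_{k-1,n}(1-\fl_k)$ stays bounded (indeed the analogous sum $\sum_k a_{n,k-1}^{(1)}$ converges by Lemma \ref{lemma:anjk} applied to the constant sequence $x_j \equiv 1$), the error vanishes. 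I would handle the tail carefully by splitting the sum at a fixed index $N$: for $k \le N$ the weights $\fl_{k-1,n} \to 0$ as $n\to\infty$ (by the $\max_j a_{n,j}^{(1)}\to 0$ part of Lemma \ref{lemma:anjk}, since $\fl_{k-1,n}$ tends to $0$), so the initial block is negligible, while for $k > N$ both $\varphi_k(1)/(1-\fl_k)$ is close to $\nu/2$ and the Lemma \ref{lemma:phi} error is small. Combining these gives convergence to $\nu/2$ and hence the claimed identity. The case $\nu = 0$ follows the same way with limit $0$, using that $\varphi_k(1)/(1-\fl_k)\to 0$ directly from (C2) without needing Lemma \ref{lemma:phi}.
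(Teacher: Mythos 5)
Your proposal follows the paper's own route: reduce via the shape-function identity \eqref{eq:shape} to showing $\fl_{0,n}\varphi_{0,n}(s)=\sum_{k=1}^n\fl_{k-1,n}\varphi_k(f_{k,n}(s))\to\nu/2$, split off the main term $\sum_k\fl_{k-1,n}\varphi_k(1)$, control the error by Lemma \ref{lemma:phi}, and evaluate the main term by the Toeplitz-type Lemma \ref{lemma:anjk}. However, your execution of the Toeplitz step has a genuine gap. You pair the main term with the shifted weights $a_{n,k-1}^{(1)}$, writing it as $\sum_k a_{n,k-1}^{(1)}\,\varphi_k(1)/(1-\fl_{k-1})$, and assert that ``the index shift between $k$ and $k-1$ is harmless.'' Lemma \ref{lemma:anjk} requires the coefficient sequence attached to $a_{n,j}^{(1)}$ to converge, and here that sequence is
\[
\frac{\varphi_{j+1}(1)}{1-\fl_j}
=\frac{f_{j+1}''(1)}{2\fl_{j+1}^2(1-\fl_{j+1})}\cdot\frac{1-\fl_{j+1}}{1-\fl_j},
\]
while (C1)--(C3) impose no control at all on the ratio $(1-\fl_{j+1})/(1-\fl_j)$: take, e.g., $1-\fl_j=1/j$ for $j$ even and $2/j$ for $j$ odd, with $f_j''(1)=\nu(1-\fl_j)$ and $f_j'''(1)=0$; then (C1)--(C3) hold but the ratio oscillates between roughly $2$ and $1/2$, so the shifted coefficients do not converge and Lemma \ref{lemma:anjk} simply does not apply in that form. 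The conclusion of the step is still true, but only because of the unshifted matching, which is exactly what the paper uses in \eqref{eq:aux1}: $\fl_{k-1,n}=\frac{\fl_k}{1-\fl_k}a_{n,k}^{(1)}$, so the main term equals $\sum_k a_{n,k}^{(1)}\frac{f_k''(1)}{2\fl_k(1-\fl_k)}$, whose coefficients converge to $\nu/2$ by (C1)--(C2). (Note also that your displayed identity $\fl_{k-1,n}=\fl_k(1-\fl_{k-1})^{-1}a_{n,k-1}^{(1)}$ carries a spurious factor $\fl_k$; in fact $\fl_{k-1,n}=(1-\fl_{k-1})^{-1}a_{n,k-1}^{(1)}$.)

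A second, smaller gap is the case $\nu=0$. You rightly observe that Lemma \ref{lemma:phi} is unavailable there (it is stated only for $\nu>0$), but then the replacement of $\varphi_k(f_{k,n}(s))$ by $\varphi_k(1)$ --- the backbone of your argument --- has no justification, and knowing $\varphi_k(1)/(1-\fl_k)\to 0$ says nothing by itself about $\varphi_k$ evaluated at the point $f_{k,n}(s)$. What is needed is the two-sided bound of Lemma \ref{lemma:shape}, $0\le\varphi_k(t)\le 2\varphi_k(1)$, which yields $0\le\sum_k\fl_{k-1,n}\varphi_k(f_{k,n}(s))\le\sum_k a_{n,k}^{(1)}\frac{f_k''(1)}{\fl_k(1-\fl_k)}\to\nu=0$; this is precisely how the paper handles $\nu=0$, and your write-up never invokes Lemma \ref{lemma:shape}. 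Your remaining ingredients --- the reduction via \eqref{eq:shape}, the use of Lemma \ref{lemma:phi} for the error term, and the split at a fixed index $N$ together with the boundedness of $\sum_k\fl_{k-1,n}(1-\fl_k)$ --- are sound.
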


\begin{proof}%[Proof of Lemma \ref{lemma:f-conv}]
Recalling \eqref{eq:shape} and \eqref{eq:phijn} we have to show that
\[
\fl_{0,n} \varphi_{0,n}(s) = \sum_{j=1}^n \fl_{j-1,n} \varphi_j(f_{j,n}(s))
\to \frac{\nu}{2}.
\]
Here, and later on any nonspecified limit relation is meant as $n \to \infty$. 

First let $\nu=0$.
Using Lemmas \ref{lemma:shape} and \ref{lemma:anjk}, 
\[
\begin{split}
\fl_{0,n} \varphi_{0,n}(s) 
& 
% = \fl_{0,n} \sum_{j=1}^n \frac{\varphi_j(f_{j,n}(s))}{\fl_{0,j-1}} 
\leq  \sum_{j=1}^n \fl_{j,n} \frac{f_j''(1)}{\fl_j} 
%\\ &= 
\sum_{j=1}^n a_{n,j}^{(1)} 
\frac{f_j''(1)}{\fl_j (1-\fl_j)} \to 0.
\end{split}
\]
%Since $\varphi_{0,n}(s) \geq 0$, 
%the statement follows from \eqref{eq:shape}.

For $\nu\in(0,\infty)$ write
\begin{equation} \label{eq:aux-decomp}
\sum_{j=1}^n \fl_{j-1,n} \varphi_j(f_{j,n}(s)) =
\sum_{j=1}^n \fl_{j-1,n} \varphi_j(1) - 
\sum_{j=1}^n \fl_{j-1,n} (\varphi_j(1) - \varphi_j(f_{j,n}(s))).
\end{equation}
By Lemma \ref{lemma:anjk} for the first term we have
\begin{equation} \label{eq:aux1}
\sum_{j=1}^n \fl_{j-1,n} \varphi_j(1) 
= \frac{1}{2} \sum_{j=1}^n a_{n,j}^{(1)} \frac{1}{\fl_j} \frac{f_j''(1)}{1- \fl_j}
\to \frac{\nu}{2}.
\end{equation}
For the second term in \eqref{eq:aux-decomp},
\[
\left| \sum_{j=1}^n \fl_{j-1,n} (\varphi_j(1) - \varphi_j(f_{j,n}(s))) 
\right| \leq 
\sum_{j=1}^{n} a_{n,j}^{(1)} \fl_j
\sup_{s \in [0,1]}
\frac{|\varphi_j(1)-\varphi_j(s)|}{1-\fl_j} \to 0,
\]
according to Lemmas \ref{lemma:anjk} and \ref{lemma:phi}.
Combining with \eqref{eq:aux-decomp} and \eqref{eq:aux1} the statement follows.
\end{proof}

\begin{proof}[Proof of Theorem \ref{thm:yaglom-geom}]
We prove the convergence of the conditional g.f. 
For $s \in (0,1)$ we have by the previous lemma
\[
\begin{split}
\E [ s^{X_n}|X_n>0 ] = \frac{f_{0,n}(s)-f_{0,n}(0)}{1-f_{0,n}(0)} = 1- 
\frac{1-f_{0,n}(s)}{1-f_{0,n}(0)} \to \frac{2}{2+\nu} \frac{s}{1-\frac{\nu}{\nu+2}s},
\end{split}
\]
where the limit is the g.f.~of the geometric distribution with parameter $\frac{2}{2+\nu}$.
\end{proof}

\subsection{Proofs of subsection \ref{ssec:bevandorlas}}

First we need a simple lemma on the alternating sum involving binomial coefficients.

\begin{lemma}\label{lemma:sum}
For $x\in\R$
\[
\sum_{i=1}^{k-1}\binom{k-1}{i}(-1)^{i}\frac{(1+x)^i-1}{i} = 
\sum_{i=1}^{k-1}(-1)^{i}\frac{x^i}{i}.
\]
\end{lemma}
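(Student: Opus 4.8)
The plan is to prove this as an identity between polynomials in $x$ by differentiating. Writing $m \coloneqq k-1$, both sides are polynomials in $x$ (the left-hand side because $(1+x)^i$ is), and both vanish at $x=0$: on the left each summand carries the factor $(1+x)^i - 1$, and on the right each summand carries $x^i$. Hence it suffices to show that the two sides have equal derivatives in $x$, and the result follows by integrating from $0$.

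First I would differentiate. The crucial observation is that differentiation cancels the awkward $1/i$ factors. On the left,
\[
\frac{\dd}{\dd x}\sum_{i=1}^{m}\binom{m}{i}(-1)^{i}\frac{(1+x)^i-1}{i}
= \sum_{i=1}^{m}\binom{m}{i}(-1)^{i}(1+x)^{i-1},
\]
while on the right,
\[
\frac{\dd}{\dd x}\sum_{i=1}^{m}(-1)^{i}\frac{x^i}{i} = \sum_{i=1}^{m}(-1)^{i} x^{i-1}.
\]
So the task reduces to verifying that these two derivatives coincide for all $x$.

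Next I would put both derivatives in closed form. Substituting $y = 1+x$ and using the binomial theorem, the left derivative becomes
\[
\frac{1}{y}\sum_{i=1}^{m}\binom{m}{i}(-y)^{i}
= \frac{1}{y}\Bigl[(1-y)^m - 1\Bigr] = \frac{(-x)^m - 1}{1+x},
\]
since $1 - y = -x$. The right derivative is a geometric sum,
\[
\sum_{i=1}^{m}(-1)^{i} x^{i-1} = -\sum_{j=0}^{m-1}(-x)^{j} = -\frac{1-(-x)^m}{1+x},
\]
which is exactly the same expression. This establishes equality of the derivatives, and hence of the two sides.

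I do not expect a genuine obstacle here: the computation is routine once one sees that differentiating removes the $1/i$ factors and converts each side into a closed-form expression (a truncated binomial sum on the left, a geometric series on the right). The only point requiring a little care is to note that everything in sight is a polynomial in $x$, so that equality of derivatives together with agreement at $x=0$ yields the identity for every real $x$, matching the statement's quantifier $x \in \R$.
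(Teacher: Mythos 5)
Your proof is correct and follows essentially the same route as the paper's: both argue that the two sides are polynomials agreeing at $x=0$, differentiate to kill the $1/i$ factors, and identify the derivatives via the binomial theorem on the left and the geometric sum (equivalently, multiplying by $1+x$) on the right. No gaps; your version merely writes out the closed forms slightly more explicitly than the paper does.
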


\begin{proof}
Both sides are polynomials of $x$, and equality holds for $x = 0$, therefore it is enough
to show that the derivatives are equal. Differentiating the LHS we obtain
\[
\sum_{i=1}^{k-1} \binom{k-1}{i} (-1)^i (1+x)^{i-1}
= \frac{1}{1+x} \left[ ( 1  - (1+x))^{k-1} - 1 \right].
\]
Differentiating the RHS and multiplying by $(1+x)$, the statement follows.
\end{proof}

\begin{proof}[Proof of Theorem \ref{thm:bev1}]
Recall that $f_n(s)=\E s^{\xi_n}$, and let $g_n(s)=\E s^{Y_n}$ and 
$h_n(s)=\E s^{\varepsilon_n}$. Then the branching property gives that
(see e.g.~\cite[Proposition 1]{GKMP})
\begin{equation*}\label{eq:bev-gfgv}
g_n(s)=\prod_{j=1}^{n}h_j(f_{j,n}(s)).
\end{equation*}
We prove the convergence of the g.f., that is
\begin{equation*} \label{eq:g-conv}
\lim_{n \to \infty } g_n(s) = f_Y(s), \quad s \in [0,1].
\end{equation*}
Fix $s \in [0,1)$. Let us introduce
\begin{equation*} \label{eq:hatg}
\widehat{g}_n(s)=\prod_{j=1}^{n}e^{h_j(f_{j,n}(s))-1}.
\end{equation*}
Note that $\widehat g_n$ is a kind of accompanying law, its distribution
is compound-Poisson, therefore its limit is compound-Poisson too,
see \cite[Proposition 2.2]{Steutel}.
By convexity of the g.f.
\begin{equation}\label{eq:est-fjn}
f_{j,n}(s) \geq 
%1+\fl_{j+1}\ldots\fl_n(s-1) =
1+\fl_{j,n}(s-1).
\end{equation}
If $|a_k|, |b_k|< 1$, $k=1,\ldots, n$ then
\begin{equation}\label{eq:prodtosum}
\left|\prod_{k=1}^{n}a_k - \prod_{k=1}^{n}b_k\right|
\leq \sum_{k=1}^{n}\left| a_k-b_k\right|.
\end{equation}
Consequently, using \eqref{eq:prodtosum}, the inequalities
$|e^u-1-u|\leq u^2$ for $|u| \leq 1/2$, and 
$0\leq 1-h_j(s)\leq m_{j,1}(1-s)$, and \eqref{eq:est-fjn}, we have
\begin{equation}\label{eq:gfgv-konv}
\begin{split}
|g_n(s)-\widehat{g}_n(s)|
& \leq \sum_{j=1}^{n}|e^{h_j(f_{j,n}(s))-1}-h_j(f_{j,n}(s))|\\
& \leq \sum_{j=1}^{n}(h_j(f_{j,n}(s))-1)^2\leq 
\sum_{j=1}^{n}\frac{m_{j,1}^2}{1-\fl_j}a_{n,j}^{(2)}\to 0,
\end{split}
\end{equation}
where we used Lemma \ref{lemma:anjk}, 
since ${m_{n,1}}/({1-\fl_n})\to\lambda_1$ implies 
${m_{n,1}^2}/({1-\fl_n})\to 0$. 
Therefore we need to show that
\begin{equation} \label{eq:hatg-conv}
\begin{split}
& \lim_{n\to \infty} \sum_{j=1}^{n}(h_j(f_{j,n}(s))-1) \\
& = -\sum_{k=1}^{K-1}\frac{2^k\lambda_k}{\nu^k}\left(\log\left(1+\frac{\nu}{2}
(1-s)\right) + \sum_{i=1}^{k-1}(-1)^{i}\frac{\nu^i}{i2^i}(1-s)^i\right).
\end{split}
\end{equation}

By Lemma \ref{lemma:hjs},
\begin{equation} \label{eq:hj-series}
h_j(s)= \sum_{k=0}^{K-1} \frac{m_{j,k}}{k!}(s-1)^k + R_{j,K}(s),
\end{equation}
where $|R_{j,K}(s)| \leq \frac{m_{j,K}}{K!}(1-s)^K$, thus
\begin{equation} \label{eq:h-decomp}
\begin{split}
& \sum_{j=1}^{n}(h_j(f_{j,n}(s))-1) 
=\sum_{j=1}^{n} \left[\sum_{k=1}^{K-1} 
\frac{m_{j,k}}{k!}(f_{j,n}(s)-1)^k + R_{j,K}(f_{j,n}(s)) \right] \\
& = \sum_{k=1}^{K-1} {(-1)^k }
\sum_{j=1}^{n} \frac{m_{j,k}}{k! (1 - \fl_j)} a_{n,j}^{(k)}
\left( \frac{1 - f_{j,n}(s)}{\fl_{j,n}} \right)^k 
+ \sum_{j=1}^{n} R_{j,K}(f_{j,n}(s)).
\end{split}
\end{equation}
By \eqref{eq:est-fjn} and Lemma \ref{lemma:anjk},
\begin{equation} \label{eq:R-errror}
\begin{split}
\left|\sum_{j=1}^{n} R_{j,K}(f_{j,n}(s))\right|
& \leq 
\sum_{j=1}^{n} \frac{m_{j,K}}{K!}|f_{j,n}(s)-1|^K
\leq \sum_{j=1}^{n}
\frac{m_{j,K}}{K!}\fl_{j ,n}^K(1-s)^K\\ 
& = (1-s)^K\sum_{j=1}^{n}\frac{m_{j,K}}{K!(1-\fl_j)} a_{n,j}^{(K)}\to 0.
\end{split}
\end{equation}
Moreover, by \eqref{eq:shape},
\begin{equation} \label{eq:fjn-form}
\begin{split}
\frac{\fl_{j,n}}{1-f_{j,n}(s)}
& = \frac{1}{1 - s} +  
\sum_{i=j+1}^{n} a_{n,i}^{(1)} \fl_i \frac{\varphi_i (f_{i,n} (s))}{1- \fl_i} \\
& =: \frac{1}{1-s} + \frac{\nu}{2} (1 - \fl_{j,n}) + \varepsilon_{j,n}.
\end{split}
\end{equation}
By Lemmas \ref{lemma:anjk} and \ref{lemma:phi},
\begin{equation}\label{eq:hiba}
\left|\sum_{i=j+1}^{n} a_{n,i}^{(1)} \fl_i 
\frac{\varphi_i (1)-\varphi_i(f_{i,n}(s))}{1-\fl_i} 
\right|
\leq \sum_{i=1}^n a_{n,i}^{(1)}
\sup_{t \in [0,1] }
\frac{|\varphi_i (1)-\varphi_i(t)|}{1-\fl_i} \to 0,
\end{equation}
and similarly
\begin{equation}\label{eq:sum}
\left| \sum_{i=j+1}^{n} a_{n,i}^{(1)}
\fl_i \frac{\varphi_i(1)}{1-\fl_i} -
\frac{\nu}{2} \sum_{i=j+1}^{n}a_{n,i}^{(1)}\right|
\leq \sum_{i=1}^{n}a_{n,i}^{(1)} \left| \frac{1}{\fl_i} 
\frac{f_i''(1)}{2(1-\fl_i)}-\frac{\nu}{2}\right| \to 0.
\end{equation}
Noting that 
$\sum_{i=j+1}^{n}a_{n,i}^{(1)} = (1-\fl_{j,n})$,
\eqref{eq:hiba} and \eqref{eq:sum} imply
for $\varepsilon_{j,n}$ in \eqref{eq:fjn-form} that
\begin{equation*} \label{eq:vareps-def}
\max_{j\leq n} | \varepsilon_{j,n} | = 
\max_{j\leq n} 
\left|
\frac{\fl_{j,n}}{1-f_{j,n}(s)} - \frac{1}{1-s} - \frac{\nu}{2} ( 1 - \fl_{j,n})
\right| \to 0.
\end{equation*}
The latter further implies that
\[
\limsup_{n \to \infty} \max_{j \leq n} \frac{1- f_{j,n}(s)}{\fl_{j,n}} \leq 1-s,
\]
and that for $n$ large enough by the mean value theorem and \eqref{eq:fjn-form}
\[
\left| \left( \frac{\fl_{j,n}}{1- f_{j,n}(s)} \right)^{-k} - 
\left( \frac{1}{1-s} + \frac{\nu}{2} (1 - \fl_{j,n}) \right)^{-k} \right|
\leq k  \varepsilon_{j,n} .
\]
Thus 
\begin{equation} \label{eq:h-main}
\begin{split}
& \left| 
\sum_{j=1}^{n} a_{n,j}^{(k)} 
\left[ \frac{m_{j,k}}{k! (1 - \fl_j)} 
\left( \frac{1 - f_{j,n}(s)}{\fl_{j,n}} \right)^k  -
 \lambda_k 
\left( \frac{1}{1-s}  + \frac{\nu}{2} { (1 - \fl_{j,n})} \right)^{-k} 
\right] \right| \\
& \leq 
\sum_{j=1}^{n} a_{n,j}^{(k)} 
\Bigg[ 
\left| \frac{m_{j,k}}{k! (1 - \fl_j)} - \lambda_k \right|  
\left( \frac{1 - f_{j,n}(s)}{\fl_{j,n}} \right)^k \\
& \qquad + \lambda_k \left|  
\left( \frac{1 - f_{j,n}(s)}{\fl_{j,n}} \right)^k  -
\left( \frac{1}{1-s}  + \frac{\nu}{2} { (1 - \fl_{j,n})} \right)^{-k} \right|
\Bigg] \\
& \leq \sum_{j=1}^{n} a_{n,j}^{(k)} 
\left| \frac{m_{j,k}}{k! (1 - \fl_j)} - \lambda_k \right|  
+ \lambda_k \sum_{j=1}^{n} a_{n,j}^{(k)}  k \max_{j\leq n} \varepsilon_{j,n}
\to 0,
\end{split}
\end{equation}
where the second inequality holds for $n$ large enough.

Furthermore,
\begin{equation} \label{eq:int-conv}
\lim_{n\to\infty}
\sum_{j=1}^{n} a_{n,j}^{(k)}
\left( \frac{1}{1-s}+\frac{\nu}{2}(1-\fl_{j,n}) \right)^{-k} 
= \int_{0}^{1} 
\frac{y^{k-1}} {\left (\frac{1}{1-s}+\frac{\nu}{2}(1-y)\right )^k} \dd y,
\end{equation}
since the LHS is a Riemann approximation of the integral above
corresponding to the partition $\{\fl_{j,n}\}_{j=1}^n$, and 
$\fl_{j,n} - \fl_{j-1,n} = a_{n,j}^{(1)}\to 0$ uniformly in $j$ 
according to Lemma \ref{lemma:anjk}.
Changing variables $u=\nu (1-s)(1-y)+2$ and using 
the binomial theorem,
\begin{equation} \label{eq:int-conv-2}
\begin{split}
& \int_{0}^{1} \frac{y^{k-1}} {\left (\frac{1}{1-s}+\frac{\nu}{2}(1-y)\right )^k} 
\dd y \\ &
= (-1)^{k+1} \frac{2^k}{\nu^k} 
\bigg( \!\! 
\log\left(1+\frac{\nu}{2}(1-s)\right) + 
\sum_{i=1}^{k-1} \binom{k-1}{i} (-1)^{i} 
\frac{(1+\frac{\nu}{2}(1-s))^{i}-1}{i}  \bigg) \\ &
= (-1)^{k+1} \frac{2^k}{\nu^k}
\left( \log\left(1+\frac{\nu}{2}(1-s)\right) +
\sum_{i=1}^{k-1} (-1)^{i}\frac{\nu^i}{i2^i}(1-s)^i\right),
\end{split}
\end{equation}
where the last equality follows from Lemma \ref{lemma:sum}. 

Substituting back into \eqref{eq:h-decomp},
by \eqref{eq:R-errror}, \eqref{eq:h-main},
\eqref{eq:int-conv}, and \eqref{eq:int-conv-2} we obtain
\eqref{eq:hatg-conv}.
\end{proof}

\begin{proof}[Proof of Theorem \ref{thm:bev2}]
Similarly to the proof of Theorem \ref{thm:bev1}, 
\eqref{eq:gfgv-konv} holds, 
so it is enough to prove that
\begin{equation*} \label{eq:hatg-conv2}
\begin{split}
& \sum_{j=1}^{n}(h_j(f_{j,n}(s))-1) \\
& \to 
-\sum_{k=1}^{\infty}\frac{2^k\lambda_k}{\nu^k} \left(
\log\left(1+\frac{\nu}{2} (1-s)\right)
+\sum_{i=1}^{k-1} (-1)^{i} \frac{\nu^i}{i2^i}(1-s)^i \right).
\end{split}
\end{equation*}
Let $s\in(0,1)$ be a fixed arbitrary number. 
%If $\frac{\nu}{2}(1-s)<1$, 
By Taylor's theorem, for 
some $\xi \in (0, \nu(1-s)/2)$
\[
\log\left(1+\frac{\nu}{2} (1-s)\right)
+\sum_{i=1}^{k-1} (-1)^{i} \frac{\nu^i}{i2^i}(1-s)^i  =
\frac{\nu^k (1-s)^k}{2^k} \frac{1}{(1 + \xi)^k} \frac{1}{k}, 
\]
so
\[
\frac{2^k}{\nu^{k}} 
\left| \log\left(1+\frac{\nu}{2} (1-s)\right)
+\sum_{i=1}^{k-1} (-1)^{i} \frac{\nu^i}{i2^i}(1-s)^i 
\right|
\leq \frac{(1-s)^k}{k}.
\]
\begin{comment}
If $\frac{\nu}{2}(1-s) > 1$, there exists $c > 0$, such that for any $ k \geq 1$
\[
\left|\sum_{i=1}^{k-1}(-1)^{i+1} \frac{\nu^i}{i2^i}(1-s)^i \right|
\leq c + \frac{\nu^k ( 1-s)^k }{k2^k},
\]
since the sequence $ (\frac{\nu^i}{i2^i}(1-s)^i)$ is eventually 
increasing. If $\frac{\nu}{2} ( 1-s) = 1$,
note that 
$\sup_{k} | \sum_{i=1}^k (-1)^i i^{-1} | < \infty$.
\end{comment}
Therefore, for any $\varepsilon > 0$ there exists $\ell$ 
large enough such that
\begin{equation} \label{eq:inf-sum-1}
\begin{split}
& \left| \sum_{k=\ell}^{\infty} \frac{2^k\lambda_k} {\nu^k} \left(
\log\left( 1 + \frac{\nu}{2} (1-s)\right) 
+\sum_{i=1}^{k-1} (-1)^{i} \frac{\nu^i}{i2^i} (1-s)^i \right) \right| 
\\ 
& \leq \sum_{k=\ell}^\infty \frac{\lambda_k (1-s)^k}{k}
< \varepsilon.
\end{split}
\end{equation}

By Lemma \ref{lemma:hjs}, \eqref{eq:hj-series} holds with $K = \ell$
%\[
%h_j(s)= \sum_{k=0}^{\ell-1} 
%\frac{m_{j,k}}{k!}(s-1)^k + R_{j,\ell}(s),
%\]
%where $|R_{j,\ell}(s)| \leq \frac{m_{j,\ell}}{\ell!}(1-s)^\ell$, 
therefore
\[
\begin{split}
\sum_{j=1}^{n} (h_j(f_{j,n}(s))-1) 
& =\sum_{j=1}^{n} \left[\sum_{k=1}^{\ell-1} 
\frac{m_{j,k}}{k!}(f_{j,n}(s)-1)^k + R_{j,\ell} (f_{j,n}(s)) \right]\\
& = \sum_{k=1}^{\ell-1} \sum_{j=1}^{n} 
\frac{m_{j,k}}{k!} (f_{j,n}(s)-1)^k + \sum_{j=1}^{n} R_{j,\ell}(f_{j,n}(s)).
\end{split}
\]
Moreover, by \eqref{eq:est-fjn} and Lemma \ref{lemma:anjk},
\begin{equation} \label{eq:R2}
\begin{split}
\left| \sum_{j=1}^{n} R_{j,\ell} (f_{j,n}(s)) \right|
&\leq \sum_{j=1}^{n}
\frac{m_{j,\ell}}{\ell!} |f_{j,n}(s)-1|^\ell 
\leq\sum_{j=1}^{n} \frac{m_{j,\ell}}{\ell!} \fl_{j ,n}^\ell (1-s)^\ell\\
& \to \frac{(1-s)^\ell}{\ell}\lambda_\ell \leq \varepsilon.
\end{split}
\end{equation}
Summarizing, 
\[
\begin{split}
& \left| \sum_{j=1}^n ( h_j(f_{j,n}(s)) - 1) - \log f_Y(s) \right| \\
& \leq 
\Bigg| \sum_{k=1}^{\ell - 1}
\sum_{j=1}^{n} \frac{m_{j,k}}{k!} (f_{j,n}(s)-1)^k \\
& \phantom{\leq} \quad + 
\sum_{k=1}^{\ell-1} \frac{2^k\lambda_k}{\nu^k} 
\left(\log\left(1+\frac{\nu}{2}
 (1-s)\right) - \sum_{i=1}^{k-1}(-1)^{i+1}\frac{\nu^i}{i2^i}(1-s)^i \right)
 \Bigg|\\
& \phantom{\leq} \quad + 
\left|\sum_{k=\ell}^{\infty}
\frac{2^k\lambda_k}{\nu^k}\left(\log\left(1+\frac{\nu}{2}
(1-s)\right)-\sum_{i=1}^{k-1}(-1)^{i+1}\frac{\nu^i}{i2^i}(1-s)^i\right)\right| \\
& \phantom{\leq} \quad+ \left|\sum_{j=1}^{n} R_{j,\ell}(f_{j,n}(s))\right|,
\end{split}
\]
where the first term on the RHS converges to 0 by the previous result,
while the second and third are small for $n$ large by \eqref{eq:inf-sum-1} and
\eqref{eq:R2}. As $\varepsilon > 0$ is arbitrary, the result follows.
\end{proof}

\subsection{Proof of Theorem \ref{thm:compPoi}}

Before the proof we need two auxiliary lemmas.

\begin{lemma}\label{lemma:limit}
If condition (C4) of Theorem \ref{thm:bev1} or 
(C4') of Theorem \ref{thm:bev2} hold then 
$q_i = \lim_{n\to\infty}\frac{\p(\varepsilon_n = i)}{1-\fl_n}$ exists
for each $i = 1,2,\ldots$.
\end{lemma}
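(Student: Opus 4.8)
The plan is to reduce the statement to a coefficientwise convergence question for the normalized generating functions and to settle it by a normal-family argument. Write $h_n(s) = \E s^{\varepsilon_n}$ and set $g_n(s) = (h_n(s)-1)/(1-\fl_n)$. Since $h_n(s) - 1 = (\p(\varepsilon_n=0)-1) + \sum_{i\geq1}\p(\varepsilon_n=i)s^i$, the coefficient of $s^i$ in $g_n$ for $i\geq1$ is exactly $\p(\varepsilon_n=i)/(1-\fl_n)$, so the lemma is equivalent to the convergence of these Taylor coefficients. First I would record the uniform bound that drives the argument: since $\p(\varepsilon_n\geq1)\leq\E\varepsilon_n = m_{n,1}$ and $|s^{\varepsilon_n}-1|\leq 2$ for $|s|\leq 1$, every $s$ in the closed complex unit disc satisfies $|g_n(s)|\leq 2\p(\varepsilon_n\geq1)/(1-\fl_n)\leq 2 m_{n,1}/(1-\fl_n)$, which tends to $2\lambda_1$ and is therefore bounded in $n$. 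Each $g_n$ is holomorphic on $\{|s|<1\}$ because $h_n$ is a probability generating function.

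Next I would establish the pointwise limit of $g_n$ on the real interval $(0,1)$, reusing the Taylor expansion of Lemma \ref{lemma:hjs} exactly as in the proofs of Theorems \ref{thm:bev1} and \ref{thm:bev2}, but now for a single generation rather than a composition. Under (C4), applying Lemma \ref{lemma:hjs} with $\ell=K$ gives $g_n(s) = \sum_{k=1}^{K-1}\frac{m_{n,k}}{k!(1-\fl_n)}(s-1)^k + \frac{R_{n,K}(s)}{1-\fl_n}$ with $|R_{n,K}(s)|/(1-\fl_n)\leq \frac{m_{n,K}}{K!(1-\fl_n)}|s-1|^K\to\lambda_K|s-1|^K=0$, so $g_n(s)\to g(s):=\sum_{k=1}^{K-1}\lambda_k(s-1)^k$. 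Under (C4') the same expansion with arbitrary $\ell$ (legitimate since $m_{n,\ell}<\infty$ eventually), followed by letting $\ell\to\infty$, yields $g_n(s)\to g(s):=\sum_{k=1}^{\infty}\lambda_k(s-1)^k$ for $s\in(0,1)$; here the condition $\limsup_k\lambda_k^{1/k}\leq1$ guarantees that this power series in $(s-1)$ has radius of convergence at least $1$, so it converges for $|1-s|<1$ and the error $\lambda_\ell(1-s)^\ell\to0$.

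Finally I would upgrade this real, pointwise convergence to convergence of Taylor coefficients. By the uniform bound above the family $\{g_n\}$ is normal on $\{|s|<1\}$ (Montel), so every subsequence has a locally uniformly convergent sub-subsequence with holomorphic limit; since all such limits agree with $g$ on $(0,1)$, which has an accumulation point, the identity theorem forces them to coincide, and hence $g_n\to g_\infty$ locally uniformly on the whole disc for a single holomorphic $g_\infty$ extending $g$. Weierstrass' theorem then gives convergence of the Taylor coefficients, i.e.\ $q_i=\lim_n \p(\varepsilon_n=i)/(1-\fl_n)=[s^i]g_\infty$ exists for every $i\geq1$. The one point requiring care, and the main obstacle, is precisely this last passage: pointwise convergence of generating functions does not by itself transfer to their coefficients, and it is the uniform boundedness of $g_n$ on the closed unit disc that rescues the argument. (If one prefers to avoid complex analysis, the same conclusion follows by a real-variable argument: extract a subsequence along which every $\p(\varepsilon_n=i)/(1-\fl_n)$ converges to some $a_i\geq0$; the uniform mass bound $\sum_{i\geq1}\p(\varepsilon_n=i)/(1-\fl_n)\leq m_{n,1}/(1-\fl_n)$ makes the tails uniformly small, so $\sum_{i\geq1}a_i s^i = g(s)-c$ on $(0,1)$ with $c=\lim_{s\downarrow0}g(s)$ forced to be subsequence-independent, whence $a_i=[s^i](g-c)$ is the same along every subsequence.)
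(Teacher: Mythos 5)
Your proof is correct, but it takes a genuinely different route from the paper's. The paper converts the factorial-moment hypotheses into ordinary moments via Stirling numbers of the second kind, $\mu_{n,k}/(1-\fl_n)\to\mu_k=\sum_{i\le k}\Ssec{k}{i}\,i!\,\lambda_i$, extracts a subsequence along which every ratio $\p(\varepsilon_n=i)/(1-\fl_n)$ converges, shows by Fatou plus a truncation argument that any subsequential limit $(q_i)$ must satisfy $\mu_k=\sum_i q_i i^k$ for all $k$, and then gets uniqueness of $(q_i)$ from the Stieltjes moment problem, verifying Carleman's condition by means of the growth hypothesis on $(\lambda_k)$. You instead work with the normalized generating functions $g_n=(h_n-1)/(1-\fl_n)$: the uniform bound $|g_n|\le 2m_{n,1}/(1-\fl_n)$ on the closed disc makes $\{g_n\}$ a normal family, Lemma \ref{lemma:hjs} and the factorial-moment conditions give the pointwise limit $\sum_k\lambda_k(s-1)^k$ on $(0,1)$, and Montel, the identity theorem and Weierstrass then yield convergence of all Taylor coefficients. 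Both are subsequence-plus-uniqueness arguments at heart, but you replace moment-problem uniqueness by the much softer uniqueness of holomorphic functions, and this buys generality: your argument only needs $\limsup_k\lambda_k^{1/k}<\infty$, since pointwise convergence on some subinterval $(1-\delta,1)$ already suffices for the identity theorem, whereas the exponent $1$ in (C4') is tailored to Carleman's condition in the paper. What the paper's heavier route buys is the identity \eqref{eq:mu-q}, $\mu_k=\sum_i q_i i^k$, which is reused (via Stirling inversion) in the proof of Theorem \ref{thm:compPoi} to obtain \eqref{eq:lambda-q}; to recover that relation from your proof one would have to re-expand the limit function around the boundary point $s=1$, which requires extra care. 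Your parenthetical real-variable variant is also sound --- the geometric factor $s^i$, $s<1$, is exactly what makes the tails uniformly small despite only a bounded total mass --- and it is the closest in spirit to the paper's argument while still avoiding the moment problem.
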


\begin{proof}
Let $h_n[i] = \p ( \varepsilon_n = i)$.
\begin{comment}
First, assume (C4) in Theorem \ref{thm:bev1}, 
that is $\lambda_K = 0$ for some $K$. Then, by the definition of 
factorial moments in \eqref{eq:def-mnk}
\[
\lambda_K = \lim_{n\to\infty}\frac{m_{n,K}}{K! (1-\fl_n)} = \lim_{n\to\infty} 
\sum_{i=K}^\infty \binom{i}{K}\frac{h_n[i]}{1-\fl_n}
\]
and hence $\lim_{n\to\infty}\frac{h_n[i]}{1-\fl_n} = 0$ for all $i\geq K$.
Moreover,
\[
\begin{split}
\lambda_{K-1} 
& = \lim_{n\to\infty}\sum_{i=K-1}^\infty 
\binom{i}{K-1}\frac{h_n[i]}{1-\fl_n} \\
& = \lim_{n\to\infty} \left[ \frac{h_n[K-1]}{1-\fl_n} 
+ \sum_{i=K}^\infty \binom{i}{K-1} \frac{h_n[i]}{1-\fl_n} \right],
\end{split}
\]
and
\[
\frac{i}{K -1 } = \frac{K}{i-K +1} \binom{i}{K} \leq K \binom{i}{K},
\]
implying that 
$\lim_{n\to\infty} \frac{h_n[K-1]}{1-\fl_n}$ exists, too.
Similarly, backward induction gives the result.
\smallskip
\end{comment}
If (C4) holds, there are only finitely many $\lambda$'s, and 
the statement follows easily by backward induction. However, if 
(C4') holds, a more involved argument is needed, which works in both cases.

The $k$th moment of a random variable can be expressed in terms of 
its factorial moments as
\[
\mu_{n,k} \coloneqq \E \varepsilon_n^k = 
\sum_{i=1}^k \Ssec{k}{i} m_{n,i},
\]
where $\Ssec{k}{i} = \frac{1}{i!} 
\sum_{j=0}^i(-1)^j\binom{i}{j}(i-j)^k$ denotes the Stirling number of the second kind.
On Stirling numbers we refer to Graham et al.~\cite[Section 6.1]{GKP}.
Therefore 
\begin{equation} \label{eq:stirling}
\begin{split}
\sum_{i=1}^\infty \frac{ i^k h_n[i]}{1-\fl_n} 
& = \frac{\mu_{n,k}}{1-\fl_n} = \sum_{i=1}^k 
\Ssec{k}{i} \frac{m_{n,i}}{1-\fl_n} \\
& \to \sum_{i=1}^k 
\Ssec{k}{i} i! \lambda_i \eqqcolon \mu_k.
\end{split}
\end{equation}
Hence the sequence $(h_n[i]/(1- \overline f_n))_{n\in\N}$ 
is bounded in $n$ for all $i$. Therefore any subsequence contains
a further subsequence $(n_\ell)$ such that
\begin{equation} \label{eq:q-def}
\lim_{\ell \to \infty} \frac{h_{n_\ell}[i]}{1 - \overline f_{n_\ell}} = q_i,
\quad \forall i\in\N.
\end{equation}
To prove the statement we have to show that 
the sequence $(q_i)$ is unique, it does not depend on the 
subsequence. Note that the sequence $(q_i)_{i\in\N}$ is not necessarily a 
probability distribution. 

By the Fatou lemma and \eqref{eq:stirling}
\begin{equation} \label{eq:mu-q-Fatou}
\mu_k = \lim_{\ell \to \infty} \frac{\mu_{n_\ell, k}}{1 - \overline f_{n_\ell}}
\geq \sum_{i=1}^\infty \lim_{\ell \to \infty} 
\frac{h_{n_\ell}[i]}{1- \overline f_{n_\ell}} i^k = \sum_{i=1}^\infty q_i i^k.
\end{equation}
Let $\varepsilon > 0$ and $k \in \N$ be arbitrary.
Write $n_\ell = n$, and  
put $K := \left[ \frac{\mu_{k+1}}{\varepsilon}\right] + 1$.
Then
\[
\sum_{i=1}^\infty  \frac{h_n[i]}{1 - \overline f_n} i^{k+1}
\geq \sum_{i=K + 1}^\infty  \frac{h_n[i]}{1 - \overline f_n} 
i^{k}  K,
\]
and hence, by \eqref{eq:stirling} and the definition of $K$
\[
\limsup_{n \to \infty} 
\sum_{i=K + 1}^\infty  \frac{h_n[i]}{1 - \overline f_n} 
i^{k} \leq \varepsilon.
\]
Therefore by \eqref{eq:q-def}
\[
\mu_{k} = 
\lim_{n \to \infty}
\sum_{i=1}^\infty  \frac{h_n[i]}{1 - \overline f_n} 
i^{k} \leq \limsup_{n\to \infty}
\sum_{i=1}^K  \frac{h_n[i]}{1 - \overline f_n} i^k
+ \varepsilon \leq \sum_{i=1}^\infty q_i i^k + \varepsilon.
\]
Since $\varepsilon > 0$ is arbitrary by equation \eqref{eq:mu-q-Fatou}
\begin{equation} \label{eq:mu-q}
\mu_{k} = \sum_{i=1}^\infty q_i i^k.
\end{equation}
Using the Stieltjes moment problem we show that the sequence 
$(\mu_k)$ uniquely determines $(q_i)$.
In order to do that, it is enough to show that
Carleman's condition (\cite [Theorem 5.6.6.]{Simon}) is fulfilled, that 
is, $\mu_k$ is not too large. 
Since $\limsup_{n\to \infty} \lambda_n^{1/n} \leq 1$, for $n$ large 
$\lambda_n \leq 2^n$. Furthermore, by trivial upper bounds
\[
\Ssec{k}{i} i! = \sum_{\ell=0}^i (-1)^\ell \binom{i}{\ell} (i - \ell)^k
\leq i^k 2^i,
\]
thus, by \eqref{eq:stirling} for some $C >0$
\[
\mu_k = \sum_{i=0}^k \Ssec{k}{i} i! \lambda_i \leq 
C + \sum_{i=0}^k i^k 4^i \leq C (4 k )^k \leq k^{2k},
\]
for $k$ large enough, showing that Carleman's condition holds.
Hence the sequence $(q_i)_{i\in\N}$ is indeed unique, and the proof is complete.
\end{proof}

\begin{lemma} \label{lemma:binom-3}
For any $L \geq 1$, $n \geq 1$, and $x \in \R$
\[
\sum_{j=1}^L \sum_{\ell = 1}^j (-1)^{\ell +j} 
\binom{L+n}{j+n} \binom{j-\ell + n - 1}{n-1} x^\ell = (1+x)^L -1.
\]
\end{lemma}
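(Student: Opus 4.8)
The plan is to compare the two sides coefficientwise in $x$. Since both sides are polynomials in $x$ whose constant terms vanish (the left side has no $\ell=0$ contribution, and $(1+x)^L-1$ has no constant term), it suffices to show that for each $\ell \in \{1,\ldots,L\}$ the coefficient of $x^\ell$ on the left equals $\binom{L}{\ell}$, which is the coefficient of $x^\ell$ in $(1+x)^L-1$. Collecting the terms carrying a fixed power $x^\ell$ (only the indices $j \geq \ell$ contribute), the claim reduces to the single-index identity
\[
\sum_{j=\ell}^{L} (-1)^{\ell+j} \binom{L+n}{j+n}\binom{j-\ell+n-1}{n-1} = \binom{L}{\ell}.
\]

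The key step is to pass to a generalized binomial coefficient with a negative upper index. Substituting $m = j-\ell$ turns the sign into $(-1)^m$, and the standard identity $(-1)^m\binom{m+n-1}{n-1} = \binom{-n}{m}$ converts the left-hand side into
\[
\sum_{m=0}^{L-\ell} \binom{L+n}{\ell+m+n}\binom{-n}{m}.
\]
Using the symmetry $\binom{L+n}{\ell+m+n} = \binom{L+n}{L-\ell-m}$, this becomes the Vandermonde convolution $\sum_{m} \binom{L+n}{(L-\ell)-m}\binom{-n}{m}$. The terms outside the range $0 \le m \le L-\ell$ vanish automatically, since $\binom{-n}{m}=0$ for $m<0$ while $\binom{L+n}{(L-\ell)-m}=0$ once its lower index $(L-\ell)-m$ is negative; hence the sum may be taken over all integers $m$. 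Vandermonde's identity then yields $\binom{(L+n)+(-n)}{L-\ell} = \binom{L}{L-\ell} = \binom{L}{\ell}$, which is exactly what is needed.

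I do not expect a genuine obstacle here beyond the bookkeeping of indices: the only idea required is to rewrite $(-1)^m\binom{m+n-1}{n-1}$ as $\binom{-n}{m}$ so that the alternating sum becomes an ordinary Vandermonde convolution, after which the unrestricted summation range and the symmetry of the binomial coefficient make the application of Vandermonde's identity routine.
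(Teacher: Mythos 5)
Your proof is correct, and it takes a genuinely different route from the paper's. Both arguments begin identically: you compare coefficients of $x^\ell$ (the paper phrases this as changing the order of summation), reducing the claim to the single-index identity
\[
\sum_{j=\ell}^{L} (-1)^{\ell+j} \binom{L+n}{j+n}\binom{j-\ell+n-1}{n-1} = \binom{L}{\ell}.
\]
From here the paper proceeds by induction on $L$: it applies Pascal's rule to $\binom{L+1+n}{j+n}$ and invokes the induction hypothesis twice, for the pairs $(L,\ell)$ and $(L,\ell-1)$ (which is why the paper states the identity for $0 \leq \ell \leq L$, the case $\ell=0$ being needed to close the induction). You instead evaluate the sum in closed form in one pass: the substitution $m = j-\ell$ together with the negation identity $(-1)^m\binom{m+n-1}{n-1} = \binom{-n}{m}$ turns the alternating sum into the Vandermonde convolution $\sum_m \binom{L+n}{(L-\ell)-m}\binom{-n}{m} = \binom{L}{L-\ell}$, where your justification for extending the range of $m$ to all integers is correct ($\binom{-n}{m}=0$ for $m<0$, and the other factor vanishes once its lower index goes negative). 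Your approach buys a non-inductive, structurally transparent proof that explains \emph{why} the identity holds -- the inner binomial is a negative-binomial coefficient in disguise, and the whole sum is a Vandermonde product of $(1+x)^{L+n}$ and $(1+x)^{-n}$ -- at the cost of invoking Vandermonde's identity for a negative upper argument; the paper's induction is longer in spirit but entirely self-contained, using nothing beyond Pascal's rule.
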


\begin{proof}
Changing the order of summation, it is enough to prove that
for $L\geq 1$, $n \geq 1$, $0 \leq \ell \leq L$
\begin{equation*} \label{eq:binom-3}
\sum_{j=\ell}^L (-1)^{j+\ell} \binom{L+n}{j+n} \binom{j-\ell+n-1}{n-1}
= \binom{L}{\ell}.
\end{equation*}
We prove by induction on $L$.
This holds for $L = 1$, and assuming for $L \geq 1$, for $L+1$
we have by the induction hypothesis for $(L,\ell, n)$ and $(L, \ell-1, n)$
\[
\begin{split}
& \sum_{j=\ell}^{L+1} (-1)^{j+\ell} \binom{L+1+n}{j+n} 
\binom{j-\ell+n-1}{n-1}   \\ 
& = \binom{L}{\ell} + 
\sum_{j=\ell}^{L+1} (-1)^{j+\ell} \binom{L+n}{j+n-1} \binom{j-\ell+n-1}{n-1} \\
& = \binom{L}{\ell} + 
\sum_{j=\ell-1}^{L} (-1)^{j+\ell-1} \binom{L+n}{j+n} 
\binom{j-(\ell-1)+n-1}{n-1} \\
& = \binom{L}{\ell} + \binom{L}{\ell-1} = \binom{L+1}{\ell},
\end{split}
\]
as claimed.
\end{proof}

\begin{proof}[Proof of Theorem \ref{thm:compPoi}]
In order to handle assumptions (i) and (ii) together, 
under (i) we use the notation $\lambda_k = 0$ for $k \geq K$. Then 
the limiting g.f.~$f_Y$ is given in \eqref{eq:Y-genfc}.

By the Taylor series of the logarithm
\begin{equation*} \label{eq:log-series}
\log\left(1+\frac{\nu}{2}(1-s)\right) = \log\left(1+\frac{\nu}{2}\right) -  \sum_{n=1}^\infty n^{-1} 
\left(\frac{\nu}{2+\nu}\right)^{n} s^n, \quad s \in (0,1),
\end{equation*} 
and expanding $(1-s)^i$ in \eqref{eq:Y-genfc}
we have
\begin{equation} \label{eq:fY-series1}
\begin{split}
& f_Y(s) = \exp \bigg\{  
- \sum_{k=1}^\infty \frac{2^k \lambda_k}{\nu^k} 
\left(
 \log\left(1+\frac{\nu}{2}\right) + \sum_{i=1}^{k-1}(-1)^{i} 
\frac{\nu^{i}}{i2^{i}} 
\right) \\ 
& + \sum_{k=1}^\infty \frac{2^k \lambda_k}{\nu^k} \sum_{n=1}^\infty 
s^n \left( 
n^{-1} \left(\frac{\nu}{2+\nu}\right)^{n} - 
\ind_{n \leq k-1} \sum_{i=n}^{k-1} (-1)^{i+n} \frac{\nu^i}{i 2^i}
\binom{i}{n} \right)
\bigg\}.
\end{split}
\end{equation}
We claim that the order of summation in \eqref{eq:fY-series1} 
with respect to $k$ and $n$ can be interchanged.  Indeed, this 
is clear under (i), since the sum in $k$ is in fact finite.

Assume (ii). Then 
Taylor's theorem applied to the function $(1+x)^{-n}$ gives
\[
(1 + x)^{-n} = 
n \sum_{\ell=n}^{k-1} (-1)^{\ell +n } \binom{\ell}{n} 
\frac{x^{\ell-n}}{\ell} + (-1)^{k-n} \binom{k-1}{k-n} 
(1 + \xi)^{-k} x^{k-n},
\]
with $\xi \in (0,x)$. Therefore,
\begin{equation} \label{eq:cpoi-aux1}
\begin{split}
\left| 
\sum_{i=n}^{k-1} (-1)^{i+n} \binom{i}{n} \frac{x^{i-n}}{i} - 
n^{-1} (1 + x)^{-n} \right| 
\leq n^{-1} \binom{k-1}{k-n} x^{k-n},
\end{split}
\end{equation}
thus with $x = \nu / 2$
\[
\begin{split}
& \sum_{k=1}^\infty \frac{2^k \lambda_k}{\nu^k}
\sum_{n=1}^{k-1} s^n \left| 
\sum_{i=n}^{k-1} (-1)^{i+n} \binom{i}{n} \frac{\nu^{i}}{2^i i} - 
n^{-1} \left( \frac{\nu}{2 + \nu} \right)^{n} \right| \\
& \leq \sum_{k=1}^\infty \frac{2^k \lambda_k}{\nu^k} 
\sum_{n=1}^{k-1} s^n \frac{\nu^n}{2^n}  n^{-1} \binom{k-1}{k-n} 
\frac{\nu^{k-n}}{2^{k-n}} \\
& = \sum_{k=1}^\infty \lambda_k 
\sum_{n=1}^{k-1} s^n n^{-1} \binom{k-1}{k-n} \\
& \leq \sum_{k=1}^\infty \lambda_k ( 1 +s)^k,
\end{split}
\]
which is finite for $s \in (0,1)$ if $\limsup \lambda_n^{1/n} \leq 1/2$.
The other part is easy to handle, as
\[
\begin{split}
\sum_{k=1}^\infty \frac{2^k \lambda_k}{\nu^k} 
\sum_{n=k}^\infty s^n n^{-1} \left( \frac{\nu}{2 + \nu}\right)^n 
% \\ & 
& \leq \sum_{k=1}^\infty \frac{2^k \lambda_k}{\nu^k}
s^k \left( \frac{\nu}{2 + \nu}\right)^k \\
& = \sum_{k=1}^\infty \frac{2^k \lambda_k}{(2 + \nu)^k} s^k,
\end{split}
\]
which is summable.

Summarizing, in both cases the order of summation in 
\eqref{eq:fY-series1} can be interchanged, and doing so we 
obtain
\begin{equation*} \label{eq:f-series}
f_Y(s) = \exp \left\{ -A_0 + \sum_{n=1}^\infty A_n s^n \right\},
\end{equation*}
where
\begin{equation} \label{eq:A-def}
\begin{split}
A_0 & = \sum_{k=1}^{\infty} \frac{2^k\lambda_k}{\nu^k}
\left(\sum_{i=1}^{k-1}(-1)^{i} 
\frac{\nu^{i}}{i2^{i}} + \log\left(1+\frac{\nu}{2}\right)\right) \\
A_n & = 
\sum_{k=1}^{\infty}
\frac{2^k\lambda_k}{\nu^k} 
\left( 
\left(\frac{\nu}{2+\nu}\right)^n n^{-1} - 
\ind_{n \leq k-1}
\sum_{i=n}^{k-1}
(-1)^{i+n}\binom{i}{n}\frac{\nu^{i}}{i2^{i}}\right).
\end{split}
\end{equation}

Recall the notation from the proof of Lemma \ref{lemma:limit}. 
By \eqref{eq:stirling}, using the inversion formula for Stirling numbers of the 
first and second kind we have, see e.g.~\cite[Exercise 12, p.~310]{GKP},
\[
k! \lambda_k = \sum_{i=0}^k (-1)^{k+i} \Sfirst{k}{i} \mu_k,
\]
where $\Sfirst{k}{i}$ stands for the Stirling number of the first kind. 
Substituting \eqref{eq:mu-q}, and using that 
\[
\sum_{i=0}^k (-1)^{k+i} \Sfirst{k}{i}  j^k = j (j-1) \ldots (j-k+1)
\]
see, e.g.~\cite[p.263, (6.13)]{GKP}, we obtain the formula
\begin{equation} \label{eq:lambda-q}
\lambda_k = \sum_{j=k}^\infty \binom{j}{k} q_j.
\end{equation}
We also see that $\lambda_k = 0$ implies $q_j = 0$ for all $j \geq k$.
Substituting \eqref{eq:lambda-q} back into \eqref{eq:A-def}
we claim that the order of summation with respect to $k$ and $j$ 
can be interchanged. This is again clear under (i), while under 
(ii), by \eqref{eq:cpoi-aux1}
\[
\begin{split}
& \sum_{k=n+1}^\infty \sum_{j=k}^\infty \frac{2^k }{\nu^k} q_j 
\binom{j}{k}
\left| \sum_{i=n}^{k-1} (-1)^{i+n} \binom{i}{n} \frac{\nu^i}{i 2^i} 
- n^{-1} \frac{\nu^n}{(2+\nu)^n} \right| \\
& \leq \sum_{k=n+1}^\infty \sum_{j=k}^\infty 
\frac{2^k }{\nu^k} q_j  \binom{j}{k}
\frac{\nu^n}{2^n} n^{-1} \binom{k-1}{k-n} \frac{\nu^{k-n}}{2^{k-n}} \\
& = \sum_{k=n+1}^\infty \lambda_k n^{-1} \binom{k-1}{k-n} < \infty,
\end{split}
\]
since
\[
n^{-1} \binom{k-1}{k-n} = k^{-1} \binom{k}{n} \leq k^{n}.
\]
Thus the order of the summation is indeed interchangeable, 
and we obtain 
\begin{equation} \label{eq:B-def}
\begin{split}
A_n & = \sum_{j=1}^\infty q_j 
\bigg\{ 
\bigg[ \left( 1 + \frac{2}{\nu} \right)^j - 1 \bigg]
\left( \frac{\nu}{2 + \nu} \right)^n n^{-1} \\
& \qquad - \ind_{n \leq j-1}
\sum_{k=n+1}^j \frac{2^k}{\nu^k} \binom{j}{k}
\sum_{i=n}^{k-1} (-1)^{i+n} \binom{i}{n} \frac{\nu^i}{i 2^i} 
\bigg\} \\
& = : \sum_{j=1}^\infty q_j B(n,j).
\end{split}
\end{equation}
We claim that 
\begin{equation} \label{eq:B-form}
B(n,j) = \frac{\nu^n}{(2+\nu)^n n } \left[ \left( 1 + \frac{2}{\nu} \right)^{n \wedge j} -1
\right].
\end{equation}
This is clear for $j \leq n$. Writing $\ell = k -i$ in the summation
and using Lemma \ref{lemma:binom-3} with $L = j-n$ we obtain
\[
\begin{split}
& \sum_{k= n +1}^j \sum_{i=n}^{k-1} (-1)^i \binom{j}{k} 
\binom{i-1}{n-1} x^{k-i} \\
& = \sum_{k=n+1}^j \sum_{\ell = 1}^{k-n} (-1)^{k+\ell} 
\binom{j}{k} \binom{k-\ell-1}{n-1} x^\ell \\
& = (-1)^n \left[ (1+x)^{j-n} -1 \right].
\end{split}
\]
Using that $n \binom{i}{n}  = i \binom{i-1}{n-1}$, and
substituting back into \eqref{eq:B-def} with $x= 2/\nu$ we have
\[
B(n,j) = n^{-1} \left[ 1 -
\left( 1 + \frac{2}{\nu} \right)^{j-n} \right] 
+ \left[ \left( 1 + \frac{2}{\nu} \right)^j - 1 \right]
\left( \frac{\nu}{2 + \nu} \right)^n n^{-1},
\]
which after simplification gives \eqref{eq:B-form}.
\end{proof}

\noindent \textbf{Acknowledgements.}
We are grateful to M\'aty\'as Barczy for useful comments and suggestions.
This research was supported by the Ministry of Innovation and
Technology of Hungary from the National Research, Development
and Innovation Fund, project no.~TKP2021-NVA-09. Kevei's
research was partially supported by the J\'anos Bolyai Research 
Scholarship of the Hungarian Academy of Sciences.

\end{document}